\numberwithin{equation}{section}
\def\R{\mathbb{R}}
\def\E{\mathbb{E}}
\def\cP{\mathcal{P}}
\def\cH{\mathcal{H}}
\def\cF{\mathcal{F}}
\newcommand{\intdx}[1]{\int_{\mathbb{R}^{dN}} #1\,\mathrm{d} x^N}
\newcommand{\intdX}[1]{\int_{\mathbb{R}^{dN}} #1\,\mathrm{d} X^N}
\newtheorem{theorem}{Theorem}[section]
\newtheorem{lemma}{Lemma}[section]
\newtheorem{remark}{Remark}[section]
\newtheorem{corollary}{Corollary}[section]
\title{Solving stationary nonlinear Fokker-Planck equations via sampling}
\author[a]{Lei Li\thanks{E-mail: leili2010@sjtu.edu.cn}}
\author[b]{Yijia Tang\thanks{E-mail: yijia\_tang@sjtu.edu.cn}}
\author[b]{Jingtong Zhang\thanks{E-mail: sffred@sjtu.edu.cn}}
\affil[a]{School of Mathematical Sciences, Institute of Natural Sciences, MOE-LSC, Shanghai Jiao Tong University, Shanghai, 200240, P.R.China.}
\affil[b]{School of Mathematical Sciences, Shanghai Jiao Tong University, Shanghai, 200240, P.R.China.}
\date{}
\begin{document}
\maketitle

\begin{abstract}
Solving the stationary nonlinear Fokker-Planck equations is important in applications and examples include the Poisson-Boltzmann equation and the two layer neural networks.  Making use of the connection between the interacting particle systems and the nonlinear Fokker-Planck equations, we propose to solve the stationary solution by sampling from the $N$-body Gibbs distribution. This avoids simulation of the $N$-body system for long time and more importantly such a method can avoid the requirement of uniform propagation of chaos from direct simulation of the particle systems. We establish the convergence of the Gibbs measure to the stationary solution when the interaction kernel is bounded (not necessarily continuous) and the temperature is not very small. Numerical experiments are performed for the Poisson-Boltzmann equations and the two-layer neural networks to validate the method and the theory.
\end{abstract}

\section{Introduction}

The nonlinear Fokker-Planck equations describe the evolution of laws for the McKean-Vlasov processes \cite{mckean1967,funaki1984certain}, which often describe the evolution of distributions under the self-consistent mean field \cite{jabin2014}. A famous example is the Poisson-Nernst-Planck (PNP) model describing the motion of charged chemical species in a fluid medium, which is important is electrochemistry \cite{brumleve1978numerical,flavell2014conservative}. In this paper, we are concerned with solving the stationary solutions of the nonlinear Fokker-Planck equations. For one species, dynamic nonlinear Fokker-Planck equation may be written as
\begin{gather}\label{eq:nonlinearFP}
\partial_t\rho=\nabla\cdot(\rho(\nabla U+\nabla W*\rho))+\beta^{-1}\Delta\rho,
\end{gather}
and thus the stationary equation we are concerned is given by
\begin{gather}\label{eq:stanfp}
\nabla\cdot(\rho(\nabla U+\nabla W*\rho))+\beta^{-1}\Delta\rho=0.
\end{gather}
Here, $U$ is some external potential field, $W$ is the interacting potential.
The multi-species case can be similarly written out. For example, if there are two species, the stationary equations could be written as 
\begin{gather}\label{eq:stanfp2species}
\begin{split}
&\nabla\cdot(\rho_1 (\nabla U_1+\nabla W_1*\rho_1+\nabla W_c*\rho_2))+\beta^{-1}\Delta\rho_1=0,\\
& \nabla\cdot(\rho_2 (\nabla U_2+\nabla W_2*\rho_2
+\nabla W_c*\rho_1 ))+\beta^{-1}\Delta\rho_2=0.
\end{split}
\end{gather}
Here, $U_i$'s are the external potentials for the two species, $W_i$'s are the interaction potentials within the species and $W_c$ is the cross species interaction, which are assumed to be symmetric. See section \ref{sec:multispecies} for the details.

The nonlinear Fokker-Planck equations are naturally associated with the interacting particle systems at the microscopic level, which are ubiquitous in applications such as molecular dynamics \cite{frenkel2001understanding}, flocking and swarming \cite{cucker2007emergent,degond2017coagulation}, chemotaxis \cite{horstmann2003}, clustering \cite{holland1983} and consensus \cite{motsch2014}. The models consisting of exchangeable particles (one species) may be described by the first order interacting particle systems, i.e., for $i=1,\cdots, N$
\begin{equation}\label{eqn:SDE}
d X_i=-\nabla U(X_i) d t-\frac{1}{N-1}\sum_{j=1, j\neq i}^N\nabla W(X_i-X_j)d t+\sqrt{\frac{2}{\beta}}\,d B_i.
\end{equation}
The particle systems for multi-species could be similarly written out, and see section \ref{sec:multispecies} for details.  Here, we will call the label $X_i$ the ``position". The concrete significance may be other quantities (for example, $X_i$ represents the opinion in the opinion dynamics and may represent the velocity in the homogeneous Landau equation).  The processes  $B_i\in \mathbb{R}^d$ are $N$ independent standard Brownian motions. The initial data $X_i^0$'s are i.i.d. sampled from some distributions. 
There are many models that are described by the second order interacting particle systems if the ``velocity" is considered (see section \ref{subsec:seconddiscussion}). In some applications, the first order systems arise as the zero-inertia/overdamped limit of the second order Langevin systems \eqref{eqn:SDE2nd}. As remarked in section \ref{subsec:seconddiscussion}, when we consider the stationary distributions, the second order systems are not very special. In this work, we will talk about first order systems as the main examples to be focused.

The factor $1/(N-1)$ indicates that we are focusing on the regime (by choosing suitable units for physical quantities) where the interaction energy is comparable to the external energy and the total mass of the particles is of order $1$. This scaling is crucial for the connection between the particle system \eqref{eqn:SDE} and the nonlinear Fokker-Planck equation \eqref{eq:nonlinearFP}.
In this regime, as $N\to\infty$, one particle is expected to feel a mean field given by $-\nabla U-\nabla W*\rho$ and the motions of two given particles move independently.
That the motions of some focused particles tend to be independent copies under the mean field is called the ``propagation of chaos". The term ``propagation" means that the chaotic configuration is kept from the initial time $t=0$. This is the classical mean field limit and this regime may be called the mean field regime.  The nonlinear Fokker-Planck equations \eqref{eq:nonlinearFP} thus describe the mean field limits of the interacting particle systems.  The rigorous justification of the mean field limit and the propagation of chaos could date back to Kac and McKean \cite{kac1956,mckean1967} and see \cite{dobrushin1979vlasov,sznitman1991} for some classical works. Currently, this is still a popular research topic, especially for singular potentials \cite{serfaty2020mean,jabin2017,bresch2019mean}. Most of these results are for finite time $T$. In various practical applications, such as the Poisson-Boltzmann (PB) equation \cite{guoy1910constitution,chapman1913li} and the two-layer neural network \cite{mei2018mean}, we are more interested in the stationary solution of the nonlinear Fokker-Planck equations. In fact, the PB equation describes the stationary form of the PNP model. Classical discretization using finite difference or finite element may not be convenient for irregular domain and suffers from curse of dimensionality. A popular approach is to do simulation of the interacting particle systems and run the time long enough so that the solution could be approximation of the stationary solution of the nonlinear Fokker-Planck equations \cite{li2022some}. This requires the simulation for long time to reach equilibrium. Moreover, theoretically, this needs the uniform propagation of chaos, which needs a lot of additional requirements \cite{guillin2019uniform,guillin2021uniform}. These results often need the potentials to be convex in some sense \cite{guillin2019uniform} or to be considered on torus \cite{guillin2021uniform} and thus are limited in applications. Hence, there is a gap between the computation and the theory.

We are then motivated to adopt another approach, namely we choose not to do simulation of the $N$-particle systems directly, and instead to do sampling from the stationary distribution, i.e., the Gibbs distribution of the particle systems. One efficient sampling method from the $N$-body Gibbs distribution could be the Random Batch Monte Carlo method \cite{li2020random}.  We then use the samples as the approximation to the solution of the stationary solutions. Moreover, the justification of the convergence from the stationary Gibbs distribution to the nonlinear Fokker-Planck equations may avoid the strong requirements of the uniform propagation of chaos and the result here is valid in $\R^d$.
Provided that $W$ is bounded and continuous, using the standard large deviation principle, the empirical measure converges to minimizer of mean field free energy but there is not explicit convergence rate given (see, for example, the discussion in \cite{chafai2014first}). When $W$ is not continuous, the large deviation argument fails.

In this paper, we will use the tool of relative entropy to gauge the discrepancy of the joint law and the tensor product of $N$ copies of the stationary solution, inspired by the recent works for propagation of chaos \cite{jabin2016mean,jabin2018quantitative,serfaty2020mean}. Then,  the convergence of the marginal and the empirical measures can be established and an explicit rate could be given. In particular, we prove that if $\beta$ is not very big (temperature is not small), both the marginal distribution and empirical measure for the $N$-body Gibbs measure converge to a stationary distribution of the nonlinear Fokker-Planck equation. Moreover, we also establish the results for multiple species cases. The results can thus be used to justify solving the stationary nonlinear Fokker-Planck equations using sampling algorithms like the random batch Monte Carlo method \cite{li2020random}.

The rest of the paper is organized as follows. Section \ref{sec:stationary} is devoted to the introduction of the basic properties of the $N$-body Gibbs measure for the particle system \eqref{eqn:SDE} and the stationary solution to nonlinear Fokker-Planck equation \eqref{eq:nonlinearFP}. In Section \ref{sec:main}, we show the convergence of the $N$-body Gibbs measure to the stationary solution of the nonlinear Fokker-Planck equation for one species, which illustrates our main methodology and lays the foundation of sampling for solving the nonlinear Fokker-Planck equations. In Section \ref{sec:multispecies}, we consider the multi-species cases. The convergence is proved for the two-species case as the typical example.  In Section \ref{sec:numer}, we perform numerical experiments for two typical applications, namely the  PB equations and the two layer neural networks, where the sampling from the $N$-body Gibbs distribution is performed by the random batch Monte Carlo method.

\section{Setup and properties of the Gibbs measure and stationary solutions}
\label{sec:stationary}

In this section, we first collect some basic facts and properties for the Gibbs measure of the interacting particle systems in section \ref{subsec:ips} and the stationary solution of the nonlinear Fokker-Planck equation \eqref{eq:nonlinearFP}, namely \eqref{eq:stanfp} in section \ref{subsec:nonfp}. For the clarity of the presentation, we focus only on the one species case here. The multi-species case will be studied in section \ref{sec:multispecies}. Then, the idea of sampling to solve the stationary nonlinear Fokker-Planck equations is explained in section \ref{subsec:sampling}. Discussion on the second order systems will be performed in section \ref{subsec:seconddiscussion}.

\subsection{The Gibbs measure of the interacting particle system}\label{subsec:ips}

The interacting particle system \eqref{eqn:SDE} is associated with the following energy functional
\begin{gather}\label{eq:NbodyHam}
\begin{split}
E_N(x_1,\cdots,x_N)&=\sum_{i=1}^N U(x_i)+\frac{1}{(N-1)}\sum_{1\leq i<j\leq N }W(x_i-x_j)\\
&=\sum_{i=1}^N U(x_i)+\frac{1}{2(N-1)}\sum_{i\neq j}W(x_i-x_j).
\end{split}
\end{gather}
Note that we do not call this ``Hamiltonian'' as called in other literature because this is a functional of the spatial variables only without the conjugate variables (i.e., the momentum). Then, \eqref{eqn:SDE} can be written as 
\begin{gather}\label{eq:gradientformfirst}
dX_i=-\nabla_{X_i} E_N\, dt+\sqrt{\frac{2}{\beta}}\,dB_i,
\end{gather}
or in terms of $X:=(X_1, \cdots, X_N)$ and $\boldsymbol{B}=(B_1,\cdots, B_N)$
\begin{gather}
dX=-\nabla_X E_N\,dt+\sqrt{2/\beta}\,d\boldsymbol{B}.
\end{gather}

\begin{remark}\label{rmk:rescaledHam}
If we think that each particle has weight $1/N$, as considered in \cite{chafai2014first} the energy may be given by
\begin{gather}
\tilde{E}_N(x_1,\cdots,x_N)=\frac{1}{N}\sum_{i=1}^N U(x_i)+\frac{1}{N(N-1)}\sum_{1\leq i<j\leq N }W(x_i-x_j).
\end{gather}
With this scaling, the (scaled) inverse temperature $\beta$ should be like $\beta=N\bar{\beta}$ to be consistent with \eqref{eq:gradientformfirst} (the time should be rescaled as well).
Another different scaling has been used in \cite{leble2017large}, where the energy used is roughly $NE_N=N^2\tilde{E}_N$.
\end{remark}

The joint distribution of $(X_1,\cdots, X_N)$ is known to satisfy the following $N$-body Fokker-Planck (FP) equation \cite{bogachev2022fokker}
\begin{gather}\label{eq:nbodyfp}
\partial_t\rho_N=\nabla_{\mathrm{x}}\cdot(\nabla_{\mathrm{x}} E_N\,\rho_N)+\beta^{-1}\Delta_{\mathrm{x}}\rho_N,
\end{gather}
where the derivative is taken with respect to $\mathrm{x}:=(x_1,\cdots,x_N)$.
This linear Fokker-Planck is also connected to the following free energy for the $N$-body system
\begin{gather}\label{eq:nbodyfree}
\mathcal{F}_N(\rho_N):=\int_{\R^{Nd}} E_N \rho_N d\mathrm{x}+\beta^{-1}\int_{\R^{Nd}} \rho_N\log \rho_N\,d\mathrm{x}.
\end{gather}
Physically, this is the interaction energy plus the entropy. The equation \eqref{eq:nbodyfp}
is the gradient flow of \eqref{eq:nbodyfree} under the Wasserstein distance.
We recall that the $W_2$ gradient flow of the free energy functional $F$ is given by \cite{jordan1998variational}:
\begin{gather}
\partial_t \rho = \nabla \cdot \left(\rho\nabla \frac{\delta F}{\delta \rho}\right).
\end{gather}
Based on this, the following is standard. 
\begin{lemma}[Gibbs distribution]
\label{lem:steadystate}
Suppose that $\exp(-\beta E_N)\in L^1(\R^{Nd})$. Then, the $N$-body system \eqref{eqn:SDE} has a unique invariant probability measure, which is the N-body Gibbs measure given by
\begin{equation}\label{eqn:Gibbsnbody}
\begin{split}
\rho_N &=\bar{Z}_N^{-1} \exp(-\beta E_N)\\
&= \bar{Z}_N^{-1}\exp\left(-\beta\left(\sum_i U(x_i)+\frac{1}{2(N-1)}\sum_{i\neq j}W(x_i-x_j)\right)\right),
\end{split}
\end{equation}
where $\bar{Z}_N$ is the normalizing factor such that the integral is $1$.
This is the unique minimizer of the free energy \eqref{eq:nbodyfree}.
\end{lemma}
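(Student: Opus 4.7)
The plan is to verify by direct substitution that the candidate $\rho_N = \bar{Z}_N^{-1} e^{-\beta E_N}$ is a stationary probability density of the $N$-body Fokker-Planck equation, identify it as the unique free-energy minimizer via a relative-entropy splitting, and derive uniqueness of the invariant measure from standard ellipticity. Concretely, writing the steady form of \eqref{eq:nbodyfp} in flux form with flux $J = \nabla_{\mathrm{x}} E_N\,\rho_N + \beta^{-1}\nabla_{\mathrm{x}}\rho_N$, differentiating the exponential gives $\beta^{-1}\nabla_{\mathrm{x}}\rho_N = -\rho_N \nabla_{\mathrm{x}} E_N$, so $J\equiv 0$ and a fortiori $\nabla_{\mathrm{x}}\cdot J = 0$. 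The integrability hypothesis $e^{-\beta E_N}\in L^1(\mathbb{R}^{Nd})$ makes $\rho_N$ a legitimate probability density.

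For the variational statement, I would substitute $E_N = -\beta^{-1}\log\rho_N - \beta^{-1}\log\bar{Z}_N$ into \eqref{eq:nbodyfree}. For any competitor probability density $\mu$ on $\mathbb{R}^{Nd}$, this rearranges as
\begin{equation*}
\mathcal{F}_N(\mu) = \beta^{-1}\int_{\mathbb{R}^{Nd}} \mu\log\frac{\mu}{\rho_N}\,d\mathrm{x} - \beta^{-1}\log\bar{Z}_N = \beta^{-1} H(\mu\,|\,\rho_N) - \beta^{-1}\log\bar{Z}_N,
\end{equation*}
where $H(\cdot\,|\,\cdot)$ is the relative entropy. Since $H(\mu\,|\,\rho_N)\ge 0$ with equality if and only if $\mu=\rho_N$ almost everywhere, the Gibbs density is the unique minimizer of $\mathcal{F}_N$ among probability densities, and $\inf\mathcal{F}_N = -\beta^{-1}\log\bar{Z}_N$.

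For uniqueness as an invariant measure of \eqref{eq:gradientformfirst}, the generator is uniformly elliptic (diffusion $\beta^{-1}I$) with locally Lipschitz drift under mild regularity on $U$ and $W$, so the transition density is smooth and strictly positive on $\mathbb{R}^{Nd}$; irreducibility together with the strong Feller property is a textbook condition guaranteeing at most one invariant probability measure. Alternatively, one can bypass ergodic theory: any invariant density $\mu$ satisfies $\nabla_{\mathrm{x}}\cdot(\mu\nabla_{\mathrm{x}}\log(\mu/\rho_N)) = 0$, and pairing with $\log(\mu/\rho_N)$ and integrating by parts produces $\int \mu|\nabla_{\mathrm{x}}\log(\mu/\rho_N)|^2\,d\mathrm{x} = 0$, again forcing $\mu=\rho_N$.

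The main technical issue is only bookkeeping. The splitting identity should be read in $[-\beta^{-1}\log\bar{Z}_N,+\infty]$ to cover the cases where $\int E_N\,d\mu = +\infty$ or the Boltzmann entropy $\int\mu\log\mu\,d\mathrm{x}$ is $-\infty$; in either case the minimality conclusion persists because the right-hand side is then $+\infty$. One also needs enough regularity on $U$ and $W$ to invoke ellipticity/strong Feller, which is a very mild standing assumption. Neither point is a real obstacle, so the argument reduces to the two short computations above.
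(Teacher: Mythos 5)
Your proposal is correct, but it takes a genuinely different (and more self-contained) route than the paper. For the variational statement, the paper argues indirectly: it observes that any minimizer of $\mathcal{F}_N$ is a stationary solution of the linear Fokker--Planck equation, cites \cite[Corollary 3.5.8, Theorem 4.1.11]{bogachev2022fokker} to get uniqueness of the stationary measure, and separately invokes convexity of $\mathcal{F}_N$. You instead write the Gibbs variational identity
\begin{equation*}
\mathcal{F}_N(\mu)=\beta^{-1}\mathcal{H}(\mu\mid\rho_N)-\beta^{-1}\log\bar{Z}_N,
\end{equation*}
and read off that $\rho_N$ is the \emph{unique} minimizer directly from strict positivity of relative entropy. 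This is cleaner and requires nothing beyond elementary integral manipulations, and it also yields the value of the minimum for free. For uniqueness of the invariant measure, the paper again leans on the cited Bogachev results (which establish in particular that any invariant measure has a positive density, a point you implicitly assume in your ``$\mu$ as a density'' integration-by-parts argument); your alternative routes --- strong Feller plus irreducibility, or the Fisher-information dissipation identity $\int\mu\,|\nabla\log(\mu/\rho_N)|^2\,d\mathrm{x}=0$ --- are standard and equivalent, though both would need the same regularity scaffolding that the citation packages up. In short: you prove the minimization claim in a cleaner and more quantitative way, while for uniqueness of the invariant probability measure you sketch the same kind of argument the cited reference makes rigorous; the one gap to be aware of is that your IBP computation presupposes an invariant measure has a smooth, strictly positive density and that boundary terms at infinity vanish, precisely the technical content outsourced by the paper to \cite{bogachev2022fokker}.
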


It is clear that any (local) minimizer of $\cF_N$ is a stationary solution of \eqref{eq:nbodyfp} (but not vice versa).  It is easy to verify that \eqref{eqn:Gibbsnbody} is a stationary solution of \eqref{eq:nbodyfp}.  Any stationary distribution $\tilde{\rho}_N$ must have a full support in $\R^{Nd}$ and be absolutely continuous with respect to the Lebesgue measure \cite[Corollary 3.5.8]{bogachev2022fokker}.
The uniqueness of the stationary solution follows then from \cite[Theorem 4.1.11]{bogachev2022fokker}. This then implies that the free energy has a unique minimizer, which is clear because the free energy functional $\cF_N$ is convex with respect to the linear structure.

\subsection{The stationary solution of the nonlinear Fokker-Planck equation}\label{subsec:nonfp}

In this subsection, we investigate the stationary solution of the nonlinear Fokker-Planck equations. 
Similar to the $N$-particle case, the nonlinear Fokker-Planck equation \eqref{eq:nonlinearFP} is associated with the mean-field free energy 
\begin{equation} \label{eq:meanfieldfree}
  \cF(\rho):=\int_{\mathbb{R}^d} U \rho\ \mathrm{d}x 
+\frac{1}{2} \int_{\mathbb{R}^d} \rho W\ast\rho \ \mathrm{d}x +\beta^{-1}\int_{\mathbb{R}^d} \rho\log \rho \ \mathrm{d}x.
\end{equation}
In fact, it is the gradient flow of $\cF$ under the $W_2$ distance. 
As well-known, the mean-field free energy may exhibit phase transitions and could multiple local minimizers, and the global minimizers may not also be unique \cite{dawson1983critical}.

For any local minimizer, taking variation subject to $\rho \ge 0$ and $\int \rho \,dx=1$, one has
\begin{gather}
\frac{\delta \cF}{\delta \rho} = U+ W \ast\rho + \beta^{-1} (\log \rho +1)=\text{const}
\end{gather}
on the support of $\rho$. One can further verify that the support of the minimizer is full (see the appendix of \cite{mei2018mean}). These results can also be derived formally using the KKT conditions \cite{chen2022fluctuation}. 
Hence, any minimizers of $\cF$ satisfies the following relation
\begin{equation}\label{eq:rho}
\rho=Z^{-1}\exp(-\beta(U+W\ast\rho)),
\end{equation}
where $Z$ is the normalizing constant, and can be verified to be a stationary solution of  \eqref{eq:nonlinearFP} and thus a solution to \eqref{eq:stanfp}.

\subsection{Sampling to solve the stationary nonlinear Fokker-Planck equations}\label{subsec:sampling}

We aim to connect the Gibbs measure \eqref{eqn:Gibbsnbody} with the stationary solution \eqref{eq:stanfp} so that we can then do sampling from
\eqref{eqn:Gibbsnbody} to solve the stationary Fokker-Planck equations. 
Our goal in this work is then to identify the convergence of \eqref{eqn:Gibbsnbody} to one of the stationary solution \eqref{eq:stanfp} in the form of \eqref{eq:rho} in certain sense. 

To make the meaning precise, consider the $k$-marginal of \(\rho_N\) defined as
\begin{gather}
\rho_{N,k}(x_1,\dots,x_k)=\int_{\R^{(N-k)d}}\rho_N(x_1,\dots, x_k, dx_{k+1}, \cdots, dx_N)
\end{gather}
and the empirical distribution 
\begin{gather}
\mu_N:=\frac{1}{N}\sum_{i=1}^N\delta(\cdot-X_i),
\quad (X_1, X_2,\cdots, X_N)\sim \rho_N.
\end{gather}
We aim to show that $\rho_{N,1}$ and $\mu_N$ converge to $\rho$ in certain sense.  As can be imagined, when there is phase transition so that the local minimizers of $\cF$ are not unique while the $N$-body system has a unique Gibbs measure, there could be intrinsic difficulty. Our result is that when the temperature is large enough, the stationary solution of the form \eqref{eq:rho} is unique and the convergence can be quantified. The approach in section \ref{sec:main} is to use the relative entropy to do the quantitative estimate.

In particular, using the relative entropy, one can conclude that $\rho_{N,1}$ converges to $\rho$ in total variation norm with rate $1/\sqrt{N}$ and $\mu_N$ converges to $\rho$ in $H^{-\alpha}$, $\alpha>d/2$ in expectation. See section \ref{sec:main} and \ref{sec:multispecies} for details.  
In practical applications, we combine these two to obtain the following algorithm.
\begin{algorithm}[H]
\caption{Sampling to solve the nonlinear Fokker-Planck equation}
\begin{algorithmic}[1]
\STATE Set $N\ge 2$ and $N_s\ge 1$.
\STATE Draw $N_s$ samples from $\rho_N$ using certain sampling methods (e.g., some Markov chain Monte Carlo methods like RBMC).
\STATE Collect all $X_i$'s from all the $N_s$ samples to obtain $N\times N_s$ points in $\R^d$.
\STATE Use the $NN_s$ points to form a density for approximation of $\rho$ using 
\begin{gather}
\bar{\mu}_N=\frac{1}{NN_s}\sum_{j=1}^{NN_s}\delta(\cdot-X_j).
\end{gather}
\end{algorithmic}
\label{alg:samplingfornonfp}
\end{algorithm}
Clearly, $\bar{\mu}_N$ is the average of $N_s$ empirical measures.
One may do some post processing to make this distribution more smooth.
Clearly,
\[
\|\bar{\mu}_N-\rho\|_{H^{-\alpha}}\le \frac{1}{N_s}\sum_{m=1}^{N_s}
\|\mu_N^{(m)}-\rho\|_{H^{-\alpha}}.
\]
Here, $\mu_N^{(m)}$ means the empirical measure for the $m$th sample.
Since the typical rate of convergence for $\mu_N$ to $\rho$ is $N^{-1/2}$,  this is also $N^{-1/2}$. In applications, sometimes we care about the convergence of some  quantity of the form
$\int \varphi(\cdot)d\bar{\mu}_N$. This is again the weak convergence and we expect that the rate is again $N^{-1/2}$ like the law of large numbers. 

We remark also that the algorithm and the proof later in section \ref{sec:main} and \ref{sec:multispecies}  can be applied to cases when $x_i$ is in bounded domains (like torus and bounded domain with reflection boundary condition). See section  \ref{subsec:pb} for one example.

\subsection{Discussion on the second order interacting particle systems}\label{subsec:seconddiscussion}

If the ``velocity" is considered, the models for the interacting particle systems are described by the second order systems.  For example, for the Langevin system, the particle system may be given by the following for $i=1,\cdots, N$
\begin{equation}\label{eqn:SDE2nd}
\begin{split}
& d X_i= V_i\,dt,\\
& dV_i=-\gamma V_i\,dt-\nabla U(X_i) d t-\frac{1}{N-1}\sum_{j=1, j\neq i}^N\nabla W(X_i-X_j)d t+\sqrt{\frac{2\gamma}{\beta}}\,d B_i.
\end{split}
\end{equation}
This particle system is also associated with the energy functional \eqref{eq:NbodyHam}.
In fact, it can be written as
\begin{gather}\label{eq:2ndgradient}
\begin{split}
& d X= V\,dt,\\
& dV=-\gamma V\,dt-\nabla_{X} E_N\,dt+\sqrt{\frac{2\gamma}{\beta}}\,d\boldsymbol{B}.
\end{split}
\end{gather}

\begin{remark}
If we consider the rescaled energy functional in Remark \ref{rmk:rescaledHam}, and each particle has mass $1/N$, one may obtain
\[
\begin{split}
& d X= V\,dt,\\
& \frac{1}{N}dV=-\tilde{\gamma} V\,dt-\frac{1}{N}\nabla_{X} E_N\,dt+\sqrt{\frac{2\tilde{\gamma}}{\tilde{\beta}}}\,d\boldsymbol{B}.
\end{split}
\]
Now, we need $\tilde{\beta}=N\beta$ and $\tilde{\gamma}N=\gamma$ to be consistent with \eqref{eqn:SDE2nd}. The regime considered is the one where the total mass is $1$, the temperature is low, and the friction coefficient is also small. 
\end{remark}

Using \eqref{eq:2ndgradient}, it is not difficult to verify that the invariant measure is given by
\[
\pi(d\mathrm{x}, d\mathrm{v})\propto \exp\left(-\beta \left(E_N+\frac{|\mathrm{v}|^2}{2}\right)\right)d\mathrm{x}d\mathrm{v}.
\]
The Gibbs measure for the second order systems is just the Gibbs measure of the first order system tensored by the Maxwellian distribution in the velocity space. From the sampling viewpoint, we only have to focus on the Gibbs measure of the first order systems.  Similarly, the second order nonlinear Fokker-Planck equations may be written as
\begin{gather}\label{eq:kineticnonfp}
\partial_t f=-\nabla_x\cdot(vf)+\nabla_v\cdot(f(\gamma v+\nabla U+\nabla W*_x f))
+\beta^{-1}\gamma \Delta_vf,
\end{gather}
where $*_x$ means the convolution in the spatial variable.  One can find that if $\rho(x)$ is a stationary solution to \eqref{eq:stanfp}, then $\rho(x)e^{-v^2/2}$ is a solution to \eqref{eq:kineticnonfp}. Hence, if we choose to solve stationary equations of this form by sampling, we do not have to distinguish the first order and second order systems. 

In some applications, there may be coupling between the space and velocity variables. Then, the invariant measure is no longer of the product form. Nevertheless, one may view $(X_i, V_i)$ as a new particle, and the convergence of the Gibbs measure to the solution of the stationary nonlinear Fokker-Planck equation can be similarly studied as in section \ref{sec:main} or \ref{sec:multispecies}.

\section{Convergence of the stationary solutions for one species}
\label{sec:main}

In this section, we study the convergence of the Gibbs measure directly using the relative entropy as mentioned in the introduction \cite{jabin2016mean,jabin2018quantitative,lim2020quantitative}. This may allow weaker assumptions on $W$. Compared to many results in literature \cite{jabin2016mean,jabin2018quantitative,lim2020quantitative,wang2021gaussian}, our result is valid in $\R^d$.

To start with, we recall some basic notations. Let $\cP(E)$ denote the set of probability measures on a generic Polish space $E$. The relative entropy (KL divergence) between two probability measures $\mu$ and $\nu$ on Polish space $E$ is defined as
\[
\mathcal{H}(\mu\mid \nu) = \left\{
\begin{aligned}
  &\int_E \log \frac{\mathrm{d} \mu}{\mathrm{d}\nu} \mathrm{d}\mu, & \text{if} ~ \mu \ll \nu,\\
 & \infty, & \text{else},
\end{aligned}\right.
\]
where $\frac{\mathrm{d} \mu}{\mathrm{d}\nu}$ denotes the Radon-Nikodym derivative of $\mu$ with respect to $\nu$. 
Let $\mathcal{P}_{\text{sym}}(E^N)$ denote the set of symmetric probability measures on the product space $E^N$, that is for any Borel measurable set $A\subset E^N$ and any permutation $\sigma$ of $\{1,\cdots,N\}$, $\mu_N(A)=\mu_N(\sigma(A))$. For two probability measures $\mu, \nu \in \mathcal{P}(E^{k})$, the scaled (normalized) relative entropy is given by
\begin{gather}
\mathcal{H}_k(\mu\mid \nu)=\frac{1}{k}\mathcal{H}(\mu\mid \nu).
\end{gather}
The scaled relative entropy satisfies the following useful monotonicity property \cite{hauray2014,jabin2018quantitative}.
\begin{lemma}\label{lem:monotonicity}
For any probability density \(\rho_N\in \mathcal{P}_{\text{sym}}(E^{d})\) and \(\rho\in \mathcal{P}(E)\). 
Then,
\begin{equation*}
    \mathcal{H}_k(\rho_{N,k}\mid\rho^{\otimes k})\leq \mathcal{H}_N(\rho_N\mid\rho^{\otimes N}),\quad  1\leq k\leq N.
\end{equation*}
Here $\rho_{N,k}\in \mathcal{P}(E^{k})$ is the $k$-marginal of \(\rho_N\) defined as
\[
\rho_{N,k}(x_1,\dots,x_k)=\int_{E^{(N-k)}}\rho_N(x_1,\dots, x_k, dx_{k+1}, \cdots, dx_N),
\]
while $\rho^{\otimes k} \in \mathcal{P}(E^{k})$ is given by 
$\rho^{\otimes k}(x_1,\cdots, x_k)=\prod_{i=1}^k{\rho(x_i)}$.
\end{lemma}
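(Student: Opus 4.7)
My plan is to reduce the claim to a classical monotonicity of the Shannon entropy for exchangeable measures (Han's inequality) and then prove the latter by a short induction on $k$.

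The first step is an algebraic simplification exploiting the product structure of $\rho^{\otimes N}$ and the exchangeability of $\rho_N$. Since $\log\rho^{\otimes N}(x_1,\ldots,x_N)=\sum_{i=1}^N\log\rho(x_i)$ and every one-particle marginal of the symmetric $\rho_N$ coincides with $\rho_{N,1}$, a direct expansion gives
\[
\cH(\rho_N\mid\rho^{\otimes N})=\int\rho_N\log\rho_N\,dx^N-N\int\rho_{N,1}\log\rho\,dx,
\]
and similarly for $\cH(\rho_{N,k}\mid\rho^{\otimes k})$ with $k$ in place of $N$, using that $\rho_{N,k}$ inherits exchangeability from $\rho_N$. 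Dividing by $N$ and $k$ respectively, the cross terms involving $\log\rho$ cancel, so the desired inequality $\cH_k\leq\cH_N$ reduces, in terms of the Shannon entropy $H(\mu):=-\int\mu\log\mu$, to
\[
\frac{H(\rho_{N,k})}{k}\geq\frac{H(\rho_N)}{N}.
\]

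The second step is to establish the stronger monotonicity that $k\mapsto H(\rho_{N,k})/k$ is non-increasing on $\{1,\ldots,N\}$ and then iterate. Fix $k<N$ and apply the chain rule to the $(k+1)$-marginal,
\[
H(\rho_{N,k+1})=\sum_{j=1}^{k+1}h_j,\qquad h_j:=H(X_j\mid X_1,\ldots,X_{j-1}),
\]
with $h_1=H(X_1)$ and all conditional entropies computed under $\rho_{N,k+1}$. Exchangeability of $\rho_{N,k+1}$ gives $h_{j+1}=H(X_j\mid X_1,\ldots,X_{j-1},X_{j+1})$ after relabeling $X_j\leftrightarrow X_{j+1}$, and since conditioning on an additional variable never increases entropy, $h_{j+1}\leq h_j$. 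Hence $h_{k+1}$ is the smallest of $h_1,\ldots,h_{k+1}$, giving $h_{k+1}\leq H(\rho_{N,k+1})/(k+1)$. Combined with $H(\rho_{N,k})=H(\rho_{N,k+1})-h_{k+1}$ this yields
\[
H(\rho_{N,k})\geq\frac{k}{k+1}H(\rho_{N,k+1}),
\]
i.e.\ $H(\rho_{N,k})/k\geq H(\rho_{N,k+1})/(k+1)$. Iterating from $k$ up to $N-1$ closes the argument.

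The only real care needed is to make sure every integral above is well-defined: if $\cH(\rho_N\mid\rho^{\otimes N})=\infty$ the lemma is vacuous, so we may assume the left-hand side is finite, and together with the integrability of $\rho_{N,1}\log\rho$ extracted from the identity in the reduction step this makes $\int\rho_N|\log\rho_N|\,dx^N$ and all marginal entropies $H(\rho_{N,j})$ finite, thereby justifying the chain-rule decomposition used above. Beyond this routine measure-theoretic bookkeeping I foresee no conceptual obstacle; the content of the lemma is essentially Han's inequality for exchangeable sequences, recast in relative-entropy language.
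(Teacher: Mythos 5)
The paper does not give a proof of this lemma; it cites it to \cite{hauray2014,jabin2018quantitative}, so your argument stands on its own rather than paralleling a proof in the text. Your route is correct and is essentially the standard one: because the reference measure is a tensor power and $\rho_N$ is exchangeable (so every one-particle marginal of any $\rho_{N,k}$ equals $\rho_{N,1}$), the cross terms satisfy $\int\rho_{N,k}\log\rho^{\otimes k}=k\int\rho_{N,1}\log\rho$, and they cancel after dividing by $k$; this reduces the claim to Han's inequality $H(\rho_{N,k})/k\geq H(\rho_N)/N$ for exchangeable vectors, which you then obtain from the chain rule, the swap $h_{j+1}\leq h_j$ via exchangeability and ``conditioning reduces entropy,'' and a telescoping step. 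All of this is sound.

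The one point that deserves more care is the closing measure-theoretic paragraph, which as written is mildly circular. Finiteness of $\cH(\rho_N\mid\rho^{\otimes N})$ guarantees only that $\rho_N\log(\rho_N/\rho^{\otimes N})\in L^1$; it does not by itself imply that $\int\rho_N\log\rho_N$ and $\int\rho_{N,1}\log\rho$ are separately finite, so the split into Shannon-entropy pieces — and the chain-rule decomposition $H(X_1,\dots,X_{k+1})=H(X_1,\dots,X_k)+h_{k+1}$ that you apply afterwards — needs an additional integrability hypothesis (e.g.\ $\rho_{N,1}\log\rho\in L^1(E)$, or equivalently $\rho_N\log\rho_N\in L^1$), which should be assumed rather than ``extracted from the identity,'' since the identity only holds once those pieces are known to be finite. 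This is a routine, commonly suppressed hypothesis and certainly holds in the paper's Gibbs-measure setting, so it is not a conceptual gap. If you want to avoid it entirely you can stay in relative-entropy language throughout, using the always-valid chain rule $\cH(\mu\mid\nu)=\cH(\mu_X\mid\nu_X)+\E_{\mu_X}\cH(\mu_{Y\mid X}\mid\nu_{Y\mid X})$ together with superadditivity against product reference measures, which is the approach in the cited references and sidesteps any concern about $\pm\infty$ differential entropies.
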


Next, we state our main result. 
\begin{theorem}\label{thm:main}
Suppose $V$ is an external potential such that $\exp(-\beta H_N)\in L^1(\R^{Nd})$,  and the interacting kernel $W(\cdot)\in L^{\infty}(\mathbb{R}^{d})$. Then, if $\beta<(2e\sqrt{2}\|W\|_{\infty})^{-1}$, for a minimizer $\rho$ of the free energy $\cF$, there exists $C$ independent of $N$ and $N_0>0$, s.t.
\begin{gather}
\mathcal{H}_N(\rho_{N}\mid \rho^{\otimes N})\leq \frac{C}{N}, \quad \forall N>N_0.
\end{gather}
\end{theorem}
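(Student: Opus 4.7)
The plan is to adapt the relative entropy strategy of Jabin--Wang \cite{jabin2018quantitative} to the purely stationary setting. The starting point is an explicit formula for the density ratio: writing out \eqref{eqn:Gibbsnbody} and \eqref{eq:rho} and cancelling the single-particle factors $e^{-\beta U(x_i)}$, one finds
\[
\frac{\rho_N(x_1,\ldots,x_N)}{\rho^{\otimes N}(x_1,\ldots,x_N)} \;=\; \frac{e^{-\beta\,\tilde\Psi_N}}{\mathbb{E}_{\rho^{\otimes N}}[e^{-\beta\,\tilde\Psi_N}]},\qquad \tilde\Psi_N \;:=\; \frac{1}{2(N-1)}\sum_{i\neq j}\phi(x_i,x_j),
\]
with $\phi(x,y) := W(x-y) - (W\ast\rho)(x) - (W\ast\rho)(y) + \int (W\ast\rho)(z)\rho(z)\,dz$. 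The essential structural properties of $\phi$ are its symmetry, the bound $\|\phi\|_\infty\le 4\|W\|_\infty$, and the \emph{complete-degeneracy} identity
\[
\int_{\R^d}\phi(x,y)\,\rho(y)\,dy \;=\; 0\qquad\text{for every }x\in\R^d,
\]
which is a direct consequence of $\rho=Z^{-1}e^{-\beta(U+W\ast\rho)}$ being a probability density.

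Next, set $g_N(t):=\log\mathbb{E}_{\rho^{\otimes N}}[e^{-t\tilde\Psi_N}]$. Taking logarithms of the ratio above and integrating against $\rho_N$ gives the clean identity
\[
\mathcal{H}(\rho_N\mid\rho^{\otimes N}) \;=\; -g_N(\beta) - \beta\,\mathbb{E}_{\rho_N}[\tilde\Psi_N] \;=\; -g_N(\beta) + \beta g_N'(\beta),
\]
since differentiating under the integral shows $g_N'(\beta)=-\mathbb{E}_{\rho_N}[\tilde\Psi_N]$. The function $g_N$ is convex in $t$ with $g_N(0)=0$ and, by the complete-degeneracy, $g_N'(0)=-\mathbb{E}_{\rho^{\otimes N}}[\tilde\Psi_N]=0$; hence $g_N\ge 0$. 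The convex inequality $g_N(\lambda\beta)\ge g_N(\beta)+(\lambda-1)\beta g_N'(\beta)$ for any $\lambda>1$ then yields the bootstrap bound
\[
\mathcal{H}(\rho_N\mid\rho^{\otimes N}) \;\le\; \frac{g_N(\lambda\beta)-\lambda g_N(\beta)}{\lambda-1} \;\le\; \frac{g_N(\lambda\beta)}{\lambda-1}.
\]
Thus it suffices to bound $g_N(\lambda\beta)$ uniformly in $N$ for some $\lambda>1$.

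The main obstacle is therefore to prove a Jabin--Wang-type uniform exponential-moment inequality for the completely degenerate $U$-statistic $\tilde\Psi_N$: there exists an explicit threshold $c_0>0$ such that whenever $a\|\phi\|_\infty<c_0$,
\[
\sup_{N\ge 2}\,\mathbb{E}_{\rho^{\otimes N}}\Big[\exp\Big(\tfrac{a}{N-1}\sum_{i\neq j}\phi(x_i,x_j)\Big)\Big] \;<\; \infty.
\]
The argument is purely combinatorial: one Taylor-expands the exponential, integrates each multinomial term against $\rho^{\otimes N}$, and uses $\int\phi(x,y)\rho(y)dy=0$ to annihilate every contribution whose index multigraph contains a vertex touched only once; the surviving ``fully paired'' configurations are counted explicitly, and the resulting power series converges whenever $a\|\phi\|_\infty<c_0$. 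With $\|\phi\|_\infty\le 4\|W\|_\infty$, the hypothesis $\beta<(2e\sqrt 2\,\|W\|_\infty)^{-1}$ is exactly what is needed to pick $\lambda>1$ so that $\lambda\beta/2$ stays below the admissible threshold coming out of the combinatorial bound; this yields $\mathcal{H}(\rho_N\mid\rho^{\otimes N})\le C$ independently of $N$ for $N\ge N_0$, and hence $\mathcal{H}_N(\rho_N\mid\rho^{\otimes N})\le C/N$.
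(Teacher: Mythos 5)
Your proof is correct and follows essentially the same strategy as the paper: reduce the relative entropy to an exponential moment of a degenerate $U$-statistic built from a \emph{symmetrized} kernel, then invoke a Jabin--Wang-type uniform exponential moment bound. A few remarks on the points where you diverge from the paper's bookkeeping and where the details matter.

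Your $g_N$-convexity argument is mathematically equivalent to the paper's use of the Fenchel--Young change-of-measure inequality (Lemma \ref{lem:integralcontrol}); both are instances of the Donsker--Varadhan variational formula. Your formulation is arguably cleaner: by noting $\mathcal H(\rho_N\mid\rho^{\otimes N}) = \beta g_N'(\beta)-g_N(\beta)$, $g_N(0)=0$, $g_N'(0)=0$ (the last one using the complete degeneracy of $\phi$, which the paper uses only implicitly through its choice of $\Phi$), you make the role of convexity of the log-MGF transparent and avoid the extra line with $\mathcal H(\rho^{\otimes N}\mid\rho_N)\ge 0$. Your symmetrized $\phi$ is, up to a factor of $-2$, the same as the paper's equation \eqref{eqn:phi}, and you correctly identify that the symmetrization is the key structural step making both cancellation conditions hold simultaneously.

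The one place where your proof is loose is the final exponential-moment step. You appeal to the Jabin--Wang combinatorial estimate (the paper's Lemma \ref{lem:largedeviation}), but that statement carries a non-explicit universal constant $C$, so the admissible $\beta$-threshold coming out of it is \emph{not} the explicit $(2\sqrt2\,e\,\|W\|_\infty)^{-1}$ in the theorem. The paper deliberately avoids the combinatorial route for this reason and instead proves Lemma \ref{lmm:expcontrol} by a martingale decomposition plus the Marcinkiewicz--Zygmund inequality (following Lim et al.), which is what produces the sharp numerical constant. Your sentence that the hypothesis on $\beta$ is \emph{exactly} what emerges from the combinatorial bound overclaims: that route gives a qualitatively identical but quantitatively worse constant. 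If you want to reproduce the stated threshold, replace the combinatorial counting with the martingale argument: write $\tilde\Psi_N$ as a sum of martingale differences $D_k$ with $\E(D_k\mid\cF_{k-1})=0$, decompose each $D_k$ into further martingale differences $B_j^k$ with $\|B_j^k\|_\infty\le 4\|W\|_\infty$, apply Marcinkiewicz--Zygmund twice to get $\|\sum_k D_k\|_{L^p}\lesssim (p-1)\sqrt{N(N-1)/2}\,\|B_1^2\|_{L^p}$, and sum the Taylor series using Stirling. Aside from this detail, the proof is sound.
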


This result indicates that $\mathcal{H}_N(\rho_{N}\mid \rho^{\otimes N})\to 0$ as $N \to \infty$ under the assumptions of Theorem \ref{thm:main}. Due to the indistinguishability assumption on the particles, their joint law is invariant under relabelling of the particles. That is, $\rho_N$ together with $\rho_{N,k}$ are symmetric. It follows from Lemma \ref{lem:monotonicity} that $\rho_{N,k}\rightharpoonup \rho^{\otimes k}$, which is indeed the propagation of chaos at equilibrium.
Specifically, the first marginal of the invariant Gibbs measure (the law of a single particle) weakly converges to the limit law, i.e. $\rho_{N,1}\rightharpoonup \rho$ as $N \to \infty$.   Here, the conditions are weaker than those for uniform propagation of chaos.

To prove Theorem \ref{thm:main}, we need some auxiliary lemmas. The first is a type of Fenchel-Young's inequality and has been written out explicitly in \cite[Lemma 1]{jabin2018quantitative}.
\begin{lemma}
\label{lem:integralcontrol}
For any two probability densities $\rho$ and $\tilde{\rho}$ on $E$ and some test function $\Phi\in L^1(\rho)$, one has that $\forall \eta>0$,
\begin{equation*}
\int_{E}\Phi \rho(dx)\leq\frac{1}{\eta}\left(\mathcal{H}(\rho\mid\tilde{\rho})+\log{\int_{E} e^{\eta\Phi}\,\tilde{\rho}(dx)}\right).
\end{equation*}
\end{lemma}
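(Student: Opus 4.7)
The plan is to derive this as the standard Donsker--Varadhan / Gibbs variational inequality via an exponential tilting argument, ultimately reducing everything to the nonnegativity of relative entropy. First I would dispose of the degenerate cases: if $\rho \not\ll \tilde{\rho}$ then $\mathcal{H}(\rho\mid\tilde{\rho})=+\infty$ and the bound is vacuous, and if $Z_\eta := \int_E e^{\eta\Phi}\,\tilde{\rho}(dx) = +\infty$ the bound is likewise vacuous (using that $\int_E \Phi\,\rho(dx)\in\R$ by the hypothesis $\Phi\in L^1(\rho)$). So the substantive case is $\rho \ll \tilde{\rho}$ together with $0 < Z_\eta < \infty$; note $Z_\eta>0$ automatically since $e^{\eta\Phi}>0$.

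Next I would introduce the exponentially tilted probability measure $\nu$ on $E$ defined by
\begin{equation*}
\frac{d\nu}{d\tilde{\rho}} := Z_\eta^{-1}\, e^{\eta\Phi},
\end{equation*}
which is a genuine probability measure by the choice of $Z_\eta$. Because the tilting density is strictly positive, $\tilde{\rho}\ll\nu$, hence $\rho\ll\nu$, and chaining Radon--Nikodym derivatives gives $\frac{d\rho}{d\nu} = Z_\eta\, e^{-\eta\Phi}\,\frac{d\rho}{d\tilde{\rho}}$. Taking logarithms and integrating against $\rho$ yields the key identity
\begin{equation*}
\mathcal{H}(\rho\mid\nu) \;=\; \mathcal{H}(\rho\mid\tilde{\rho}) \;-\; \eta\int_E \Phi\,\rho(dx) \;+\; \log Z_\eta.
\end{equation*}
The stated inequality is then immediate: $\mathcal{H}(\rho\mid\nu)\geq 0$ (Jensen applied to the convex function $t\mapsto t\log t$, or equivalently to $-\log$ on $d\rho/d\nu$), so rearranging and dividing by $\eta>0$ delivers the claim.

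I do not foresee a serious obstacle. The only delicate point is ensuring every term in the identity above is well-defined as an element of $\R\cup\{+\infty\}$ so that the rearrangement is legitimate: the hypothesis $\Phi\in L^1(\rho)$ makes $\int_E\Phi\,\rho(dx)$ a finite real number, while the reduction to the substantive case $0<Z_\eta<\infty$ and $\rho\ll\tilde{\rho}$ keeps the other two terms finite and real. Note that the argument requires no additional structure on $E$ beyond its being a measurable space equipped with the two probability measures, so the lemma holds in full generality as stated.
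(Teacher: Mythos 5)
Your proof is correct. Note, however, that the paper itself does not prove Lemma \ref{lem:integralcontrol} at all: it is quoted as ``a type of Fenchel--Young's inequality'' with a citation to Lemma 1 of Jabin--Wang (2018), so there is no in-paper argument to match against. Your exponential-tilting derivation is the standard Donsker--Varadhan/Gibbs variational argument, and it is mathematically equivalent to the Fenchel--Young route the citation alludes to: applying the pointwise inequality $uv\le u\log u-u+e^{v}$ with $u=\frac{d\rho}{d\tilde\rho}$ and $v=\eta\Phi-\log Z_\eta$ and integrating against $\tilde\rho$ gives exactly your identity $\mathcal{H}(\rho\mid\nu)=\mathcal{H}(\rho\mid\tilde\rho)-\eta\int_E\Phi\,\rho(dx)+\log Z_\eta$ rearranged, with nonnegativity of $\mathcal{H}(\rho\mid\nu)$ playing the role of the Fenchel--Young slack. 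What your version buys is a cleaner bookkeeping of the degenerate cases ($\rho\not\ll\tilde\rho$, or $Z_\eta=\infty$, where the bound is vacuous since the left-hand side is finite by $\Phi\in L^1(\rho)$ and $Z_\eta>0$ always) and the observation that only the finite terms are moved across the identity, so the rearrangement is legitimate even when the entropies are infinite; the Fenchel--Young phrasing is marginally shorter but needs the same normalization by $Z_\eta$ to produce the logarithm. Your closing remark that no Polish-space structure is needed is also accurate; the paper's Polish assumption is inherited from its general setup, not from this lemma.
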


Next, we need the following lemma.
\begin{lemma}\label{lmm:expcontrol}
Consider the random variable 
\begin{gather}
Z=(W\ast \rho) (X_1) 
-W(X_1-X_2)+(W\ast \rho) (X_2)-\int_{\mathbb{R}^d} \rho W\ast \rho \ \mathrm{d}x,
\end{gather}
where $X_1, X_2$ are i.i.d. from $\rho$.
If there exists $\alpha>0$ such that $\lim_{p\to\infty}\alpha^{-p}\|Z\|_{L^p}^p\le 1$, then for $\eta<\sqrt{2}/(e\alpha)$, one has
\begin{multline}\label{eq:expcontrol}
\limsup_{N\to\infty} \int\exp\Big(\eta\sum_{i=1}^N\Big(W*\rho(x_i)  \\
-\frac{1}{2(N-1)}\sum_{j:j\neq i}W(x_i-x_j)-\frac{1}{2}\int W*\rho \rho \,dx\Big)\Big) \rho^{\otimes N}\,dx_1\cdots dx_N<\infty.
\end{multline}
\end{lemma}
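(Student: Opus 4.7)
The plan is to rewrite the quantity inside the exponential as a centered U-statistic in $X_1,\ldots,X_N$ and then expand the exponential in a power series, in the spirit of the Jabin--Wang argument \cite{jabin2018quantitative}. Setting $w:=W\ast\rho$ and $c:=\int\rho\,w\,dx$, a short symmetrization shows
\[
S_N:=\sum_{i=1}^{N}\Bigl(w(X_i)-\tfrac{1}{2(N-1)}\sum_{j\ne i}W(X_i-X_j)-\tfrac{1}{2}c\Bigr)=\frac{1}{2(N-1)}\sum_{i\ne j}Z_{ij},
\]
where $Z_{ij}:=w(X_i)+w(X_j)-W(X_i-X_j)-c$, so that the kernel $Z$ in the statement of the lemma is precisely $Z_{12}$. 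The decisive structural property is the partial cancellation $\E[Z_{ij}\mid X_j]=0$ for every $j$, which is immediate from the definitions of $w$ and $c$.

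I would then expand the exponential and study each moment:
\[
\E_{\rho^{\otimes N}}\bigl[e^{\eta S_N}\bigr]=\sum_{p=0}^{\infty}\frac{\eta^p}{p!\,(2(N-1))^p}\sum_{((i_k,j_k))_{k=1}^p}\E\Bigl[\prod_{k=1}^p Z_{i_kj_k}\Bigr],
\]
where the inner sum is over tuples of ordered pairs with $i_k\ne j_k$. By the partial cancellation, any multi-index in which some label $\ell\in\{1,\ldots,N\}$ appears in exactly one of the pairs (as $i_k$ or $j_k$) makes the full expectation vanish, since integrating out $X_\ell$ first annihilates the whole term. Hence only the \emph{clustered} multi-indices -- those for which every label appearing in $\{i_1,j_1,\ldots,i_p,j_p\}$ occurs at least twice -- contribute. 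For these surviving terms, the generalized H\"older inequality gives $\bigl|\E[\prod_k Z_{i_kj_k}]\bigr|\le \|Z\|_{L^p}^p$, since every $Z_{i_kj_k}$ is equidistributed with $Z$.

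It then remains to count the number $T_p^N$ of clustered multi-indices. By partitioning the $2p$ slots of the $p$ pairs into blocks of size $\ge 2$ and assigning distinct labels in $\{1,\ldots,N\}$ to distinct blocks (the dominant contribution coming from perfect pairings, i.e.\ all blocks of size $2$), Stirling's formula gives an $N$-uniform bound on $T_p^N/(p!\,(2(N-1))^p)$ whose exponential rate in $p$, combined with the hypothesis $\limsup_{p\to\infty}\alpha^{-p}\|Z\|_{L^p}^p\le 1$, makes the $p$-th summand $\lesssim (\eta\alpha e/\sqrt{2})^p$ up to subexponential factors in $p$. Summing the resulting geometric-type series yields finiteness exactly in the regime $\eta<\sqrt{2}/(e\alpha)$, which is what the lemma claims.

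The main obstacle is this sharp combinatorial count: controlling the clusters of size $>2$ and extracting the precise exponential rate $e/\sqrt{2}$ require careful Stirling-type bookkeeping and are what pin down the threshold constant in the lemma. Everything else (the symmetrization, the conditional-mean-zero cancellation, and the H\"older accounting) is routine and reduces cleanly to this single combinatorial estimate.
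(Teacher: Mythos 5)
Your proposal is a correct strategy, but it is a genuinely different route from the one the paper actually takes. Both share the one indispensable idea -- writing the exponent as $\frac{1}{2(N-1)}\sum_{i\neq j}Z_{ij}$ with the \emph{symmetrized} kernel $Z_{ij}=w(X_i)+w(X_j)-W(X_i-X_j)-c$, so that $\E[Z_{ij}\mid X_i]=\E[Z_{ij}\mid X_j]=0$. The paper even flags this symmetrization as ``our key observation'' and explicitly contrasts the two possible continuations. What differs is how the cancellation is exploited. You go down the Jabin--Wang path (essentially Lemma~\ref{lem:largedeviation}): expand $e^{\eta S_N}$ into a power series, kill all non-clustered multi-indices by conditional integration, bound surviving terms via H\"older, and then count clustered tuples by Stirling. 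The paper instead adapts the martingale proof of \cite[Lemma~4.3]{lim2020quantitative}: it builds martingale differences $D_k=\sum_{j<k}(w(X_j)-W(X_j-X_k))+(k-1)w(X_k)-(k-1)c$ with $\E(D_k\mid\cF_{k-1})=0$, applies the Marcinkiewicz--Zygmund inequality twice to reduce $\|\sum_k D_k\|_{L^p}$ to $\|B_1^2\|_{L^p}=\|Z\|_{L^p}$, and then feeds the resulting moment bound $\|\sum_k D_k\|_{L^p}\le (p-1)\sqrt{N(N-1)/2}\,\|Z\|_{L^p}$ into the power series of the exponential, with Stirling producing the explicit threshold $\eta<\sqrt{2}/(e\alpha)$. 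The trade-off is exactly what you identify as ``the main obstacle'': the combinatorial route defers all the work to a sharp cluster count whose constant is not immediate, whereas the martingale route replaces that count entirely with a Burkholder-type moment inequality and yields the threshold constant with no combinatorics at all (this is precisely why the authors remark that the martingale version gives ``explicit'' conditions). Your sketch is sound, but note that you have not actually verified that the cluster count reproduces the same constant $e/\sqrt{2}$; in particular, the clustered structures here are unions of cycles in a labelled directed multigraph with no self-loops, not just perfect pairings of slots, so the Stirling bookkeeping you gesture at is more delicate than in the pure-pairing case, and it is not obvious a priori that the resulting exponential rate matches the one the martingale argument gives.
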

The result above is reminiscent of the ``large deviation estimate'' by Jabin and Wang {\cite[Therorem 4]{jabin2018quantitative}} using combinatoric techniques. 
\begin{lemma}\label{lem:largedeviation}
For any probability measure $\rho$ on $E$, $\{X_1,X_2.\cdots\}$ be a sequence of i.i.d. random variables with distribution $\rho$. $\phi(x,y)$ satisfies $\gamma:=C(\sup_{p\ge 1}\|\sup_z|\phi(\cdot, z)|\|_{L^p(\rho)}/p)^2<1$ where $C$ is a universal constant. Assume that $\phi$ satisfies the following cancellations
\[
\int_{E} \phi(x,y)\rho(x)\mathrm{d}x =0 \quad \forall y,\quad \int_{E} \phi(x,y)\rho(y)\mathrm{d}y =0 \quad \forall x.
\]
Then
\[
\sup_{N\geq 2} \int_{E^N} \exp\left(\frac{1}{N} \sum_{i,j=1,i\neq j}^N \phi(X_i,X_j)\right)\rho^{\otimes N} \leq \frac{2}{1-\gamma}<\infty.
\]
\end{lemma}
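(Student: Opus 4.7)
The strategy is the Jabin--Wang Taylor expansion argument. The plan is to begin by expanding the exponential and the $k$-th power:
\[
I_N := \int_{E^N}\exp\Bigl(\tfrac{1}{N}\sum_{i\neq j}\phi(X_i,X_j)\Bigr)\rho^{\otimes N} = \sum_{k=0}^{\infty}\frac{1}{k!\,N^k}\sum_{\substack{(i_1,j_1),\ldots,(i_k,j_k)\\i_\ell\neq j_\ell}}\mathbb{E}\Bigl[\prod_{\ell=1}^k\phi(X_{i_\ell},X_{j_\ell})\Bigr],
\]
and to interpret each inner term as a directed multi-graph on $\{1,\ldots,N\}$ with $k$ edges. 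The goal is to show that each $k$-th contribution is bounded by $\gamma^k$ up to an overall factor, yielding a convergent geometric series.

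Next, I would use the two-sided cancellation to kill most terms. By independence and Fubini, if a vertex $v$ appears as an endpoint of exactly one of the $k$ pairs (either as head or as tail), then integrating $X_v$ first produces an inner integral $\int\phi(x,X_{j_\ell})\rho(dx)=0$ or $\int\phi(X_{i_\ell},y)\rho(dy)=0$, so that term vanishes. Hence only multi-graphs in which every used vertex has total degree (in-degree plus out-degree) at least $2$ contribute; in particular the number $q$ of distinct used vertices satisfies $q\leq k$.

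For the surviving terms, using $|\phi(x,y)|\leq\Phi(x):=\sup_z|\phi(x,z)|$ and independence, I would bound
\[
\Bigl|\mathbb{E}\prod_\ell\phi(X_{i_\ell},X_{j_\ell})\Bigr|\leq \prod_v\|\Phi\|_{L^{d_v}(\rho)}^{d_v}\leq \prod_v(m\,d_v)^{d_v},\qquad m:=\sup_{p\geq 1}\frac{\|\Phi\|_{L^p(\rho)}}{p},
\]
where $d_v$ is the out-degree of $v$ (so $\sum_v d_v=k$). The number of directed multi-graphs realising a prescribed degree sequence on a fixed set of $q$ used vertices is controlled by standard multinomial counts, and the vertex-choice factor is $\binom{N}{q}\leq N^q/q!$.

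To conclude, I would assemble these pieces into an upper estimate of the form
\[
\frac{1}{k!\,N^k}\sum_{q\leq k}\binom{N}{q}\sum_{\substack{\text{deg.\ sequences}\\ \text{min deg}\geq 2}}(\text{comb.\ factor})\cdot \prod_v(m\,d_v)^{d_v}
\]
on the $k$-th summand, and then use Stirling-type bounds together with the min-degree-$2$ constraint to collapse this double sum into $(Cm^2)^k=\gamma^k$; the quadratic dependence on $m$ is precisely the geometric consequence of each used vertex carrying at least two half-edges. Summing the resulting geometric series in $\gamma$ and handling the small-$k$ boundary contributions separately yields the prefactor $2/(1-\gamma)$. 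The principal technical obstacle is this last combinatorial step: calibrating the universal constant $C$ and performing the degree-constrained sum carefully so that it collapses into the stated geometric series, which is the heart of the original Jabin--Wang argument.
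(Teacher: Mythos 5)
First, a point of orientation: the paper does not prove this lemma at all --- it is quoted verbatim from Jabin and Wang \cite[Theorem 4]{jabin2018quantitative}, and the authors deliberately avoid reproducing its combinatorial proof. For their actual estimates they instead prove the weaker Lemma \ref{lmm:expcontrol} by a martingale argument (Marcinkiewicz--Zygmund type inequalities applied to the differences $D_k$, following \cite[Lemma 4.3]{lim2020quantitative}), which is more elementary, gives explicit constants, and suffices for bounded kernels after the symmetrization \eqref{eqn:phi}. So your proposal is not "the paper's proof in different words"; it is an attempted reconstruction of the original Jabin--Wang argument, and it must be judged as a standalone proof.

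Judged that way, there is a genuine gap. Your outline correctly identifies the skeleton: Taylor expansion of the exponential, interpretation of each term as a multi-graph, the observation that the two-sided cancellations annihilate every term containing a vertex of total degree one (hence $q\le k$ distinct vertices), and moment bounds through $\Phi(x)=\sup_z|\phi(x,z)|$ with $\|\Phi\|_{L^{p}}\le mp$. But the step you defer --- "collapsing the degree-constrained double sum into $(Cm^2)^k$" --- is not a routine calibration; it is the entire content of the theorem, and your assembled estimate does not obviously produce it. Note that your moment bound $\prod_v(m\,d_v)^{d_v}$ uses only out-degrees, with $\sum_v d_v=k$; a crude summation then yields something of order $(mk)^k/k!\sim(em)^k$, i.e.\ \emph{linear} in $m$ per edge, whereas the claimed $\gamma$ is quadratic in $m$. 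The quadratic gain comes from a delicate balancing act between the factor $N^{q-k}\le 1$ (strongly small when few distinct vertices carry high multiplicity) and the growth of the $L^p$ norms (strong when multiplicities are high), together with the degree-$\ge 2$ constraint and Stirling estimates; also, the geometric series one actually obtains is indexed more like $\gamma^{k/2}$ up to constants, not $\gamma^k$ per summand as you assert. Without carrying out this counting (which occupies several pages in Jabin--Wang), the proposal is an outline of a known proof rather than a proof, and the universal constant $C$ in the hypothesis is left entirely undetermined. If you want a self-contained argument at the level of this paper, the more economical route is the one the authors take: give up the sharp $\sup_{N\ge2}$ statement, symmetrize $\phi$ as in \eqref{eqn:phi}, and run the martingale/Marcinkiewicz--Zygmund argument of Lemma \ref{lmm:expcontrol}.
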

It is tempted to apply Lemma \ref{lem:largedeviation} with 
\[
\phi(x,y) = (W\ast \rho) (x) -\frac{1}{2}W(x-y)-\frac{1}{2}\int_{\mathbb{R}^d}\rho W\ast \rho\,\mathrm{d}x.
\]
It is easily found that $\int_{\mathbb{R}^d} \phi(x,y)\rho(x)\mathrm{d}x = \int_{\mathbb{R}^d}\rho  (W\ast \rho)\ \mathrm{d}x - (W\ast\rho) (y)$
and $\int_{\mathbb{R}^d} f(x,y)\rho(y)\mathrm{d}y = (W\ast\rho) (x)-\int_{\mathbb{R}^d}\rho  (W\ast\rho)\,\mathrm{d}x$
do not cancel out.

Our key observation in this work is that if we introduce a symmetric version of the above function
\begin{equation}\label{eqn:phi}
\phi(x,y) = \frac{1}{2}(W\ast\rho) (x) + \frac{1}{2}(W\ast\rho) (y)
-\frac{1}{2}W(x-y)-\frac{1}{2}\int_{\mathbb{R}^d}\rho (W\ast\rho)\,\mathrm{d}x,
\end{equation}
then $\phi$ satisfies the two cancellation conditions in Lemma \ref{lem:largedeviation}
and Lemma \ref{lmm:expcontrol} can be established.
Here, with this observation, we would like to accomodate the martingale proof in  \cite[Lemma 4.3]{lim2020quantitative} to the new case here. We remark that the
results in \cite[Lemma 4.3]{lim2020quantitative} is weaker than Lemma \ref{lem:largedeviation} but the conditions needed could be explicit in the proof and might be generalized to more other cases in the future.

\begin{proof}[{Proof of Lemma \ref{lmm:expcontrol}}]
Let $X_1, \cdots, X_N$ be i.i.d. samples from $\rho$.
Let $\mathcal{F}_k=\sigma(X_1,\cdots, X_k)$.

Define
\[
D_k=\sum_{j=1}^{k-1} \Big((W\ast \rho) (X_j) 
-W(X_j-X_k)\Big)+ (k-1)(W\ast \rho) (X_k)-(k-1)\int_{\mathbb{R}^d} \rho W\ast \rho \ \mathrm{d}x.
\]
It can be verified that $\E(D_k | \cF_{k-1})=0$
and
\[
\sum_{k=1}^N D_k=(N-1)\sum_{k=1}^NW*\rho(X_k)
-\sum_{k,j: k>j}W(X_j-X_k)-\frac{N(N-1)}{2}\int_{\mathbb{R}^d} \rho W\ast \rho \ \mathrm{d}x.
\]
The estimates \eqref{eq:expcontrol} then becomes $\E\exp(\frac{\eta}{N-1}\sum_{k=1}^ND_k)$.

Applying the Marcinkiewicz-Zygmund type inequality (see \cite[Lemma 4.3]{lim2020quantitative}), one has
\[
\| \sum_{k=1}^ND_k\|_{L^p}^2 \le (p-1)\sum_{k=1}^N \|D_k\|_{L^p}^2.
\]
Define
\[
B_j^k=(W\ast \rho) (X_j) 
-W(X_j-X_k)+(W\ast \rho) (X_k)-\int_{\mathbb{R}^d} \rho W\ast \rho \ \mathrm{d}x.
\]
Since $\E(B_j^k | X_k)=0$, one can apply the Marcinkiewicz-Zygmund type inequality
inequality again to obtain that 
\[
\|D_k\|_{L^p}^2\le (p-1)\sum_{k=1}^{k-1} \|B_j^k\|_{L^p}^2.
\]
Consequently,
\[
\| \sum_{k=1}^ND_k\|_{L^p}\le (p-1)\sqrt{\sum_{k=1}^N \sum_{j=1}^{k-1} \|B_j^k\|_{L^p}^2 }=(p-1)\sqrt{\frac{N(N-1)}{2}}\|B_1^2\|_{L^p}.
\]
Using the this estimate, one has
\[
\E\exp\left(\frac{\eta}{N-1}\sum_{k=1}^ND_k\right)
\le 1+\sum_{p\ge 2}\frac{\eta^p (p-1)^p \|B_1^2\|_{L^p}^p}{p!}\left(\frac{N}{2(N-1)}\right)^p,
\]
and the result follows by Stirling's formula.
\end{proof}

Now, we now prove  Theorem \ref{thm:main}.
\begin{proof}[Proof of Theorem \ref{thm:main}]

First, one has
\[
\prod_{i=1}^N\rho(x_i)
=Z^{-N}\exp(-\beta(\sum_{i=1}^N U(x_i)+\sum_{i=1}^N W*\rho(X_i))).
\]
According to Lemma \ref{lem:steadystate}, the Gibbs measure can be rewritten as
$$
 \rho_N=Z_N^{-1}\exp\left(-\beta\left(\sum_i U(x_i)+\frac{1}{2(N-1)}\sum_{i\neq j}W(x_i-x_j)+\frac{N}{2}\int_{\mathbb{R}^d}\rho (W\ast \rho)\,\mathrm{d}x \right)\right),
$$
where $Z_N=\bar{Z}_N\exp(-\frac{N\beta }{2}\int_{\mathbb{R}^d}\rho(W\ast \rho)\mathrm{d}x)$.  
Let $\rho$ be a stationary solution of the nonlinear Fokker-Planck equation.  Direct computation reveals that
\[
\begin{aligned}
    &\mathcal{H}(\rho^{\otimes N}\mid\rho_N)
    = \ N \int_{\mathbb{R}^d}\rho\log \rho\mathrm{d}x - \intdx{\rho^{\otimes N} \log \rho_N }\\
=& -N \log Z+\log Z_N. 
\end{aligned}
\]
Then, the nonnegativity of the relative entropy yields the relation
\begin{equation}
    -N \log Z+\log Z_N\geq 0.
\end{equation}

On the other hand,
\[
\begin{aligned}
    &\mathcal{H}(\rho_N\mid\rho^{\otimes N})
    =\intdx{ \rho_N \log \rho_N }-
    \intdx{ \rho_N \log \rho^{\otimes N} }\\
    =& \intdx{ \rho_N \left(-\log Z_N-\beta\left(\sum_{i=1}^N U(x_i)+\frac{1}{2(N-1)}\sum_{i,j: i\neq j}W(x_i-x_j)+\frac{N}{2}\int_{\mathbb{R}^d}\rho(W\ast \rho)\right)\right)}\\
    &- \intdx{\rho_N\left(-N\log Z -\beta\left(\sum_{i=1}^N U(x_i)+\sum_{i=1}^N W\ast\rho(x_i)\right)\right)}\\
    =&\ N \log Z - \log Z_N + \beta\intdx{\rho_N \Phi }
    \le \beta \intdx{\rho_N \Phi },
\end{aligned}
\]
where 
\begin{equation}\label{eq:nbodyPhi}
   \Phi (x_1,\cdots,x_N) =\left(-\frac{1}{2(N-1)}\sum_{i,j: i\neq j}W(x_i-x_j) +\sum_{i=1}^N W\ast\rho(x_i)-\frac{N}{2}\int_{\mathbb{R}^d}\rho (W\ast \rho)\right).
\end{equation}

Obviously we do not know the properties of $\rho_N$ and would much prefer having expectations with respect to the tensorized law $\rho^{\otimes N}$. 
Applying Lemma \ref{lem:integralcontrol} with $\eta=c\beta$, one has
\begin{equation}
\label{eqn:expectation}
   \intdx{\Phi\rho_N}\leq\frac{1}{c\beta}\left(\mathcal{H}(\rho_N\mid\rho^{\otimes N})+\log\intdx{\exp(c\beta \Phi)\rho^{\otimes N}}\right). 
\end{equation}
Hence, for \(c>1\), we can get
\begin{equation*}
\mathcal{H}(\rho_N\mid\rho^{\otimes N})\leq \frac{1}{c-1}\log \intdx{\exp(c\beta \Phi)\rho^{\otimes N}}.
\end{equation*}

Applying Lemma \ref{lmm:expcontrol}, since $\|Z\|_{L^{\infty}}\leq 4\|W\|_{L^{\infty}}<\infty$, we find that when 
\[
\beta<\frac{1}{2\sqrt{2}e\|W\|_{\infty}},
\]
there exists $c>1$ and $N_0>0$ such that
\[
\mathcal{H}(\rho_N\mid\rho^{\otimes N}) \le \frac{C}{c-1}, \forall N>N_0.
\]
\end{proof}

As a direct consequence of Theorem \ref{thm:main}, the minimizer of the mean-field free energy $F$ is unique when $\beta$ is small enough. In other word, there exists a unique stationary solution $\rho$ to the nonlinear Fokker-Planck equation. 

Besides, we can obtain the convergence in the total variation norm or the Wasserstein metric under the same condition. The results follow from the monotonicity property, the Csisz\'ar–Kullback–Pinsker inequality \cite{villani2009optimal} for the TV norm and  the Talagrand transportation inequality for the Wasserstein distance \cite{bobkov1999exponential}.
\begin{corollary}
\label{cor: converge rate}  
For $\beta<(2e\sqrt{2}\|W\|_{\infty})^{-1}$ and $N$ large enough, for any fixed $k$,
\[
\|\rho_{N,k}-\rho^{\otimes k}\|_{TV}\leq \sqrt{2\mathcal{H}(\rho_{N,k} \mid \rho^{\otimes k})}\leq \sqrt{\frac{2 k}{N}\mathcal{H}(\rho_{N} \mid \rho^{\otimes N})}\lesssim
\sqrt{\frac{k}{N}}.
\]
Similar results hold for the Wasserstein distance
\[
W_1(\rho_{N,1},\rho) \lesssim \sqrt{\frac{k}{N}}.
\]
\end{corollary}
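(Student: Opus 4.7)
The plan is to chain three ingredients already available in the paper: the monotonicity property of the scaled relative entropy (Lemma 2.2), the main quantitative entropy bound (Theorem 3.1), and two standard relative-entropy-to-metric inequalities. The corollary is essentially a formal consequence of these, so I do not anticipate any genuine obstacle; the only place requiring care is justifying the transportation inequality used for the Wasserstein bound.

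First I would fix $k$ and apply Lemma 2.2 with the symmetric $N$-body Gibbs measure $\rho_N$ and the stationary solution $\rho$ to obtain
\[
\mathcal{H}(\rho_{N,k}\mid \rho^{\otimes k}) = k\,\mathcal{H}_k(\rho_{N,k}\mid\rho^{\otimes k}) \le k\,\mathcal{H}_N(\rho_N \mid \rho^{\otimes N}) = \frac{k}{N}\,\mathcal{H}(\rho_N\mid \rho^{\otimes N}).
\]
Under the hypothesis $\beta<(2e\sqrt{2}\|W\|_{\infty})^{-1}$, Theorem 3.1 yields $\mathcal{H}_N(\rho_N\mid\rho^{\otimes N})\le C/N$ for $N>N_0$, so that $\mathcal{H}(\rho_N\mid\rho^{\otimes N})\le C$ and therefore $\mathcal{H}(\rho_{N,k}\mid \rho^{\otimes k})\lesssim k/N$.

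Next I would invoke the Csisz\'ar--Kullback--Pinsker inequality $\|\mu-\nu\|_{TV}\le \sqrt{2\mathcal{H}(\mu\mid\nu)}$ with $\mu=\rho_{N,k}$ and $\nu=\rho^{\otimes k}$. Chaining this with the two inequalities above reproduces exactly the three displayed steps in the statement and gives the advertised $\sqrt{k/N}$ rate for the total variation distance.

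For the Wasserstein bound I would apply Talagrand's $T_1$ transportation inequality to $\rho$: if $\rho$ satisfies $W_1(\mu,\rho)^2\lesssim \mathcal{H}(\mu\mid\rho)$ (equivalently, $\rho$ has sub-Gaussian tails, by the Bobkov--G\"otze characterization), then $W_1(\rho_{N,1},\rho)\lesssim \sqrt{\mathcal{H}(\rho_{N,1}\mid \rho)}\lesssim 1/\sqrt{N}$. The one step needing justification is the validity of this transportation inequality for $\rho$; since $\rho\propto e^{-\beta(U+W\ast\rho)}$ and $W\ast\rho$ is bounded (as $W\in L^{\infty}$), $\rho$ is a bounded perturbation of $e^{-\beta U}$, so the Holley--Stroock perturbation principle transfers any transportation or log-Sobolev inequality available for $U$ (e.g.\ whenever $U$ grows at least quadratically at infinity) to $\rho$. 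Under such a standing assumption on $U$, the Wasserstein conclusion follows immediately from the entropy bound already derived.
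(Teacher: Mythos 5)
Your proof follows essentially the same route the paper takes: monotonicity of scaled relative entropy (Lemma \ref{lem:monotonicity}), the main entropy bound (Theorem \ref{thm:main}), then Csisz\'ar--Kullback--Pinsker for total variation and a Talagrand-type $T_1$ inequality for $W_1$. The one point worth flagging is that you correctly observe the paper's Wasserstein claim is not self-contained: it tacitly presumes $\rho$ satisfies a $T_1$ inequality, which requires some standing assumption on $U$ (e.g.\ quadratic growth, so that $e^{-\beta U}$ satisfies $T_1$ and Holley--Stroock transfers it to $\rho$ since $W\ast\rho$ is bounded). Your explicit justification of this step is a useful addition; the paper merely cites the reference without stating the needed hypothesis.
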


The following estimate concerns on the convergence from the empirical measure $\mu_N=\frac{1}{N}\sum_{i=1}^N\delta (x-X_i)$ of particle system \eqref{eqn:SDE} at steady state to $\rho$ in $H^{-\alpha}$, for $\alpha > d/2$. Together with the tightness, this can imply the weak convergence of the empirical measures, justifying using sampling to compute the stationary nonlinear Fokker-Planck equations.
\begin{corollary}
\label{cor:sobolevestimate}
For each $\alpha > d/2$, there exists constant $C$ depends only on $\alpha$ such that
\begin{equation*}
    \mathbb{E}\|\mu_N-\rho\|_{H^{-\alpha}}^2\leq \frac{C}{N}
\end{equation*}
for $\beta$ small enough and $N$ large enough.
\end{corollary}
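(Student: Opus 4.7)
The plan is to invoke the Fenchel--Young variational bound from Lemma \ref{lem:integralcontrol} to transfer the $\rho_N$-expectation of $\|\mu_N-\rho\|_{H^{-\alpha}}^2$ into an exponential moment computation under the tensor product $\rho^{\otimes N}$, where the particles are i.i.d.\ and sharp concentration tools are available. The entropic input comes from Theorem \ref{thm:main}, which yields $\cH(\rho_N\mid\rho^{\otimes N})=N\,\cH_N(\rho_N\mid\rho^{\otimes N})\le C$; that is, the unscaled relative entropy is $O(1)$. After applying Lemma \ref{lem:integralcontrol} with $\eta$ of order $N$, dividing by $\eta$ turns this $O(1)$ entropy into the desired $O(1/N)$ rate, so the task is reduced to producing an exponential moment bound at the scaling $\eta\propto N$.

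For the exponential moment, I would use the Fourier representation
\[
\|\mu_N-\rho\|_{H^{-\alpha}}^2=(2\pi)^{-d}\int_{\R^d}(1+|\xi|^2)^{-\alpha}\,|\widehat{\mu_N-\rho}(\xi)|^2\,d\xi.
\]
Since $\alpha>d/2$, the weight $w(\xi):=(1+|\xi|^2)^{-\alpha}$ is integrable, so $\tilde w:=w/\int w$ is a probability density on $\R^d$. Applying Jensen's inequality to the convex function $\exp$ against $\tilde w(\xi)\,d\xi$ gives
\[
\exp\!\Bigl(\eta\,\|\mu_N-\rho\|_{H^{-\alpha}}^2\Bigr)\le \int \exp\!\Bigl(\eta\,c_\alpha\,|\widehat{\mu_N-\rho}(\xi)|^2\Bigr)\,\tilde w(\xi)\,d\xi,
\]
where $c_\alpha:=(2\pi)^{-d}\int w<\infty$. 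Taking the $\rho^{\otimes N}$-expectation and exchanging it with the $\xi$-integral by Fubini reduces the problem to a pointwise-in-$\xi$ estimate of $\E_{\rho^{\otimes N}}\exp(\eta c_\alpha|\widehat{\mu_N-\rho}(\xi)|^2)$. For each fixed $\xi$, the variable $\widehat{\mu_N-\rho}(\xi)=N^{-1}\sum_{k=1}^N(e^{-i\xi\cdot X_k}-\hat\rho(\xi))$ is an average of i.i.d., centered, complex random variables bounded by $2$. Splitting into real and imaginary parts, Hoeffding's lemma yields sub-Gaussian bounds for each with variance proxy of order $1/N$; the standard chi-square moment estimate $\E e^{sY^2}\le(1-2s\sigma^2)^{-1/2}$ for sub-Gaussian $Y$ (applied via Cauchy--Schwarz to combine real and imaginary parts) then produces
\[
\E_{\rho^{\otimes N}}\exp\!\Bigl(\eta c_\alpha |\widehat{\mu_N-\rho}(\xi)|^2\Bigr)\le M
\]
uniformly in $\xi$, provided $\eta\le cN$ for some explicit $c=c(\alpha,d)>0$. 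Feeding this back through Lemma \ref{lem:integralcontrol} with $\eta=cN$ and combining with the entropy bound from Theorem \ref{thm:main} yields
\[
\E\|\mu_N-\rho\|_{H^{-\alpha}}^2\le \frac{1}{cN}\bigl(C+\log M\bigr)\le\frac{C'}{N},
\]
as claimed.

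The main obstacle I anticipate is achieving the exponential moment bound at the precise scaling $\eta\propto N$. A naive route through Pinsker's inequality and Lemma \ref{lem:monotonicity} only controls $\|\rho_{N,1}-\rho\|_{TV}$ and $\|\rho_{N,2}-\rho^{\otimes 2}\|_{TV}$ at rate $N^{-1/2}$, and a direct expansion of $\E\|\mu_N-\rho\|_{H^{-\alpha}}^2$ into one- and two-point integrals against the Bessel kernel would inherit only a square-root rate. The passage to $1/N$ must therefore come from the full Gaussian-style concentration of the Fourier marginals under i.i.d.\ sampling, uniform in the frequency $\xi$, combined with the integrability of $w$ that allows one to interchange the frequency integral and the exponential via Jensen.
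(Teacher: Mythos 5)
Your proposal is correct and follows the same overall scaffolding as the paper: apply Lemma~\ref{lem:integralcontrol} with $\eta$ scaled proportionally to $N$, write $\|\mu_N-\rho\|_{H^{-\alpha}}^2$ in Fourier form, use Jensen against the normalized Bessel weight $(1+|\xi|^2)^{-\alpha}/C_\alpha$ to pull the exponential inside the $\xi$-integral, and then produce a uniform-in-$\xi$, uniform-in-$N$ bound on $\E_{\rho^{\otimes N}}\exp\bigl(\eta c_\alpha N\,|\widehat{\mu_N-\rho}(\xi)|^2\bigr)$ before closing the entropy inequality via Theorem~\ref{thm:main}. Where you diverge is in the engine driving that last exponential moment bound. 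The paper reuses Lemma~\ref{lem:largedeviation} (the Jabin--Wang combinatorial large deviation estimate), applied to the rank-one kernel $\phi(x,y)=\mathrm{Re}\bigl[(e^{-i\xi\cdot x}-\hat\rho(\xi))\overline{(e^{-i\xi\cdot y}-\hat\rho(\xi))}\bigr]$, which trivially satisfies the required cancellations. You instead note that $\widehat{\mu_N-\rho}(\xi)$ is the empirical mean of i.i.d.\ centered complex variables bounded by $2$, so by Hoeffding each of its real and imaginary parts is sub-Gaussian with variance proxy $O(1/N)$; the Gaussian-style second-moment bound $\E e^{tY^2}\le(1-2t\sigma^2)^{-1/2}$ together with Cauchy--Schwarz across the two parts then gives the uniform exponential moment at the required scale $\eta\propto N$. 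Both are valid; your route is more elementary and self-contained. The full combinatorial lemma is genuinely needed elsewhere in the paper for the non-rank-one kernel $W$, but for a quadratic functional of a bounded centered empirical mean, your direct sub-Gaussian argument is the more transparent tool. One minor point that both arguments should acknowledge: the double sum $|\widehat{\mu_N-\rho}(\xi)|^2$ includes diagonal terms, which Lemma~\ref{lem:largedeviation} omits; but these contribute at most $4/N$ before scaling, hence only an $O(1)$ multiplicative factor, so the conclusion is unaffected.
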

\begin{proof}[Proof of Corollary \ref{cor:sobolevestimate}]
The proof is the whole space version of {\cite[Lemma 2.6]{wang2021gaussian}}. 

Since the Dirac measure belongs to $H^{-\alpha}(\mathbb{R}^d) $ for $\alpha > d/2$, $\mu_N - \rho \in H^{-\alpha}(\mathbb{R}^d)$.
Denote $\Psi = \|\mu_N-\rho\|_{H^{-\alpha}}^2$, then
\[
\Psi = \int_{\mathbb{R}^d}\left(1+|\xi|^2\right)^{-\alpha}|\widehat{(\mu_N - \rho) }(\xi)|^2\mathrm{d}\xi.
\]
Here, $\hat{f}$ represents the Fourier transform of $f$.
Since  $\Psi <\infty$, applying Lemma \ref{lem:integralcontrol} with $\eta\leftarrow \eta N$
\[     
\mathbb{E} \Psi = \intdx{\Psi\rho_N}\leq\frac{1}{\eta}\left(\mathcal{H}_N(\rho_N\mid\rho^{\otimes N})+\frac{1}{N}\log\intdx{\exp(N\eta\Psi)\rho^{\otimes N}}\right).
\]
Let $C_{\alpha}=\int_{\mathbb{R}^d} \frac{1}{(1+|\xi|^2)^{\alpha}}\mathrm{d}\xi$, then the Jensen's inequality gives
\[
\exp(N\eta\Psi) \leq \int_{\mathbb{R}^d} \frac{ \left(1+|\xi|^2\right)^{-\alpha}}{C_{\alpha}}  \exp\left(N\eta C_{\alpha}|\widehat{(\mu_N - \rho)}(\xi)|^2\mathrm{d}\xi\right).
\]
Since 
\[
|\widehat{(\mu_N - \rho)}(\xi)|^2
=\frac{1}{N^2}\sum_{i,j}\mathrm{Re}\left[\left(e^{i\xi \cdot X_i}-\int_{\mathbb{R}^d} e^{i\xi\cdot x}\rho(x)\mathrm{d}x \right) \left(e^{-i\xi \cdot X_j}-\int_{\mathbb{R}^d} e^{-i\xi\cdot y}\rho(y)\mathrm{d}y \right)\right].
\]
Then, Lemma \ref{lem:largedeviation} implies that for $\eta$ small enough (depending on $\alpha$ and $W$), $\exists \ C$ depends on $\alpha$, s.t.
$$
\sup_{\xi}\sup_{N\geq 2} \intdX{\exp\left(N \eta C_{\alpha}\langle \phi,\mu_N\otimes \mu_N\rangle
\right)\rho^{\otimes N}} \leq C.
$$
Together with Theorem \ref{thm:main}, one then concludes that
\[
 \mathbb{E}\|\mu_N-\rho\|_{H^{-\alpha}}^2\leq \frac{C}{N}.
 \]
\end{proof}

Again, we do not need $W$ and $U$ be to continuous any more and we have the rate of convergence. The rate $1/\sqrt{N}$ is widely considered to be optimal for the convergence of empirical measures as it corresponds to the stochastic fluctuations.

As a next corollary, we consider the fluctuation measure
\[
\mathcal{G}_N=\sqrt{N}(\mu_N-\rho)
\]
around the mean field limit. Corollary
\ref{cor:sobolevestimate} suggests 
\begin{equation} \label{eqn:hypo}
\limsup_{N\to\infty} \mathbb{E}\|\mathcal{G}_N \|_{H^{-\alpha}}^2 \leq C \text{~for~} \alpha > d/2,    
\end{equation}
when $\beta$ is small enough.
Here, we consider the interaction fluctuation where $W$ does not necessarily belong to $H^{\alpha}$ with $\alpha>d/2$.
\begin{corollary}
\label{cor:boundness}
For $\beta<(2e\sqrt{2}\|W\|_{\infty})^{-1}$ there exists $N_0>0$ such that
\begin{equation}\label{eqn:hypo2}
  \sup_{N>N_0} \mathbb{E}\langle W(x-y), \mathcal{G}_N^{\otimes 2} \rangle <\infty.  
\end{equation}
\end{corollary}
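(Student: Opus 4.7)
The plan is to reduce the fluctuation pairing $\langle W(x-y),\mathcal{G}_N^{\otimes 2}\rangle$ to the same $N$-body function $\Phi$ that appeared in the proof of Theorem \ref{thm:main}, and then to rerun that Jabin--Wang-style argument with $-\Phi$ in place of $\Phi$ so as to obtain a uniform upper bound on $-\E_{\rho_N}[\Phi]$.

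First I would expand $\langle W(x-y),(\mu_N-\rho)^{\otimes 2}\rangle$ and split off the diagonal $i=j$ in the double sum over particles. A direct computation yields
\begin{align*}
N\langle W(x-y),(\mu_N-\rho)^{\otimes 2}\rangle
&= \tfrac{1}{N}\sum_{i,j} W(X_i-X_j)-2\sum_{i}(W\ast\rho)(X_i)+N\!\int\rho\,(W\ast\rho)\,dx\\
&= -2\,\Phi(X_1,\ldots,X_N)+W(0)-\tfrac{1}{N(N-1)}\sum_{i\neq j}W(X_i-X_j),
\end{align*}
with $\Phi$ as in \eqref{eq:nbodyPhi}. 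Since $W\in L^\infty$, the last two summands are bounded in absolute value by $2\|W\|_\infty$, so it suffices to bound $-\E_{\rho_N}[\Phi]$ uniformly from above for $N>N_0$.

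To do that, I would apply Lemma \ref{lem:integralcontrol} to the test function $-\Phi$ with $\eta=c\beta$ for some $c>1$, obtaining
\[
-\E_{\rho_N}[\Phi]\le\frac{1}{c\beta}\Bigl(\cH(\rho_N\mid\rho^{\otimes N})+\log\!\int\exp(-c\beta\Phi)\,\rho^{\otimes N}\Bigr).
\]
Theorem \ref{thm:main} bounds the first piece by $\cH(\rho_N\mid\rho^{\otimes N})=N\cH_N(\rho_N\mid\rho^{\otimes N})\le C$ for $N>N_0$. For the exponential integral I invoke Lemma \ref{lmm:expcontrol} with $-Z$ in place of $Z$: the martingale identities $\E(-B_j^k\mid X_k)=0$ and the Marcinkiewicz--Zygmund estimates driving that proof are insensitive to sign, and $\|-Z\|_{L^p}=\|Z\|_{L^p}\le\|Z\|_\infty\le 4\|W\|_\infty$, so the same constant $\alpha=4\|W\|_\infty$ is admissible. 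The threshold therefore remains $c\beta<1/(2\sqrt{2}\,e\|W\|_\infty)$, which is compatible with the hypothesis $\beta<1/(2\sqrt{2}\,e\|W\|_\infty)$ for any $c>1$ sufficiently close to $1$.

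Combining these bounds gives $-\E_{\rho_N}[\Phi]\le C'$ uniformly for $N>N_0$, whence $\E\langle W(x-y),\mathcal{G}_N^{\otimes 2}\rangle\le 2C'+2\|W\|_\infty<\infty$, which is \eqref{eqn:hypo2}. The only delicate point is the bookkeeping in Step 1, namely tracking that $\langle W,\mu_N^{\otimes 2}\rangle$ differs from the off-diagonal pair sum appearing in $\Phi$ by the diagonal term $W(0)/N$ and by the gap between the normalizations $1/(N(N-1))$ and $1/N^2$; both corrections are $O(\|W\|_\infty)$ after multiplying by $N$ and hence harmless.
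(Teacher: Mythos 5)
Your proof is correct and matches the paper's (very terse) intent: the paper writes out the same expansion, notes the $1/N$ versus $1/(N-1)$ discrepancy, and then says ``using the same proof for Theorem \ref{thm:main}, the result follows''; your proposal is precisely what that invocation unpacks to, namely applying Lemma \ref{lem:integralcontrol} to $-\Phi$ and observing that the Marcinkiewicz--Zygmund martingale argument behind Lemma \ref{lmm:expcontrol} is sign-insensitive, so the same constants go through. (You also correctly identify the factor as $-2\Phi$; the paper's ``$-\tfrac12\Phi$'' is a typo, though the sign, which is what matters, is the same.)

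One remark worth recording: since the corollary as stated only requires an \emph{upper} bound, there is an even shorter route that avoids Lemma \ref{lmm:expcontrol} and the smallness condition on $\beta$ altogether. From the proof of Theorem \ref{thm:main} one has the exact identity
\[
\beta\int\Phi\,\rho_N\,d\mathrm{x}=\mathcal{H}(\rho_N\mid\rho^{\otimes N})+\mathcal{H}(\rho^{\otimes N}\mid\rho_N)\ge 0,
\]
since $\mathcal{H}(\rho^{\otimes N}\mid\rho_N)=-N\log Z+\log Z_N$. Hence $-\mathbb{E}_{\rho_N}\Phi\le 0$, and combined with your $O(\|W\|_\infty)$ bookkeeping this gives $\mathbb{E}\langle W,\mathcal{G}_N^{\otimes 2}\rangle\le 2\|W\|_\infty$ directly. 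Your longer route is still what the paper's own phrasing suggests, and it is what one would use to additionally get the lower bound $\mathbb{E}\langle W,\mathcal{G}_N^{\otimes 2}\rangle\ge -C$ (i.e.\ two-sided boundedness), which is presumably the content actually needed from the hypothesis it replaces in \cite{delgadino2021diffusive}.
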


\begin{proof}
It can be computed that 
\[
 \langle W(x-y), \mathcal{G}_N^{\otimes 2} \rangle 
 = \frac{1}{N}\sum_{i,j=1}^NW(X_i-X_j)-2\sum_{i=1}^N W*\rho(X_i)+N\int_{\mathbb{R}^d}\rho (W\ast \rho)\,\mathrm{d}x.
 \]
This is essentially $-\frac{1}{2}\Phi$ with $\Phi$ given in \eqref{eq:nbodyPhi}. The only difference is that 
we have $1/N$ here while there is $1/(N-1)$ there. The difference is clearly tiny. 
Using the same proof for Theorem \ref{thm:main}, the result follows.
\end{proof}

In \cite{delgadino2021diffusive}, they also tried to study the fluctuation of the Gibbs measure. There, they assumed \eqref{eqn:hypo2} while we can see that \eqref{eqn:hypo2} actually holds when the interaction kernel is bounded (not necessarily continuous).

\section{Convergence for the multi-species cases}\label{sec:multispecies}

In this section, we consider the multi-species case, which is actually common in applications. For example, in electrochemistry, we often want to find the distributions of different charged chemical species in a fluid medium. To make the presentation clean, we consider only two species, while the general case can be treated similarly.

\subsection{The problem setup}

We will assume that there are $N$ particles for each species. 
The particles for the first species are labelled by $X_i$, $1\le i\le N$
while the particles for the second species are labelled by $Y_j$, $1\le j\le N$.
The equations of the motion using the overdamped Langevin may be given by
\begin{equation}\label{eq:twospecies}
\begin{aligned}
dX_i = -\left(\nabla U_1(X_i) + \frac{1}{N-1}\sum_{k=1, k\neq i}^N{\nabla W_1(X_i- X_j)}+ \frac{1}{N}\sum_{j=1}^N{\nabla  W_c(X_i-Y_j)}\right)dt&\\
 + \sqrt{\frac{2}{\beta}}\,dB_{1i}(t), \quad  i=1, \dots, N, &\\
dY_j = -\left(\nabla U_2(Y_j) + \frac{1}{N-1}\sum_{\ell=1, \ell \neq j}^N{\nabla W_2(Y_j-Y_\ell)}- \frac{1}{N}\sum_{i=1}^N{\nabla W_c(X_i-Y_j)}\right)dt &\\
+ \sqrt{\frac{2}{\beta}}\,dB_{2j}(t), \quad j=1, \dots, N.&
\end{aligned}
\end{equation}
Here, $U_1, U_2$ are the external potentials for the two species, $W_1$ is the interaction potential within the first species, $W_2$ is the interaction potential within the second species and $W_c$ is the cross species interaction. The Brownian motions $B_{1,i}, B_{2,j}$ are all independent.
For the charged chemical species, one may have $W_1(z)=z_1^2\Phi(z)$, $W_2(z)=z_2^2\Phi(z)$ and $W_c(x, y)=z_1z_2\Phi(x-y)$, where $z_i$'s are the valence of the ions and $\Phi$ being the Coulomb potential. Typically, $z_1=1$ and $z_2=-1$. We assume that all the interaction potentials are symmetric while the asymmetric case can be similarly treated. Then, one has
\[
\nabla W_c(x-y)=-\nabla W_c(y-x).
\]

It is not hard to find the corresponding nonlinear Fokker-Planck equations are given by
\begin{gather}\label{eq:nonlinearFP2species}
\begin{split}
&\partial_t\rho_1=\nabla\cdot(\rho_1 (\nabla U_1+\nabla W_1*\rho_1+\nabla W_c*\rho_2))+\beta^{-1}\Delta\rho_1,\\
& \partial_t\rho_2=\nabla\cdot(\rho_2 (\nabla U_2+\nabla W_2*\rho_2
+\nabla W_c*\rho_1 ))+\beta^{-1}\Delta\rho_2.
\end{split}
\end{gather}
The stationary equations are thus \eqref{eq:stanfp2species}.

The energy functional for the system \eqref{eq:twospecies} is 
\begin{multline}
E_N(\mathrm{x},\mathrm{y}) = \sum_{i=1}^N{U_1(x_i)} + \sum_{i=1}^N{U_2(y_i)} + \frac{1}{2(N-1)}\sum_{i,j=1, i\neq j}^N{W_1(x_i, x_j)} \\
+ \frac{1}{2(N-1)}\sum_{i,j=1, i\neq j}^N{W_2(y_i, y_j)} + \frac{1}{N}\sum_{i,j=1}^N{W_c(x_i, y_j)},
\end{multline}
where $\mathrm{x}=(x_1,\cdots, x_N)$
and $\mathrm{y}=(y_1,\cdots, y_N)$.

Similar to the one species case, the equations \eqref{eq:twospecies} are actually
\begin{equation}
d\begin{pmatrix}X\\Y\end{pmatrix} = -\nabla_{(\mathrm{x},\mathrm{y})} E_N(X,Y)dt + \sqrt{\frac{2}{\beta}}\,d\boldsymbol{B},
\end{equation}
so that the Gibbs distribution is given by
\begin{gather}\label{eq:multirhoN}
\rho_N(x_1,\dots,x_N,y_1,\dots,y_N)=\bar{Z}_N^{-1}\exp(-\beta E_N(\mathrm{x},\mathrm{y})).
\end{gather}

The nonlinear Fokker-Planck equation has the free energy
\begin{multline}
\mathcal{F}(\rho_1, \rho_2)=\frac{1}{2}\int_{\R^d\times\R^d} W_1(x-x')\rho_1(x)\rho_1(x')dxdx'+\frac{1}{2}\int_{\R^d\times\R^d} W_2(y-y')\rho_2(y)\rho_2(y')dydy' \\
+\int_{\R^d\times \R^d} W_c(x-y)\rho_1(x)\rho_2(y)dxdy+
\beta^{-1}\int \rho_1\log\rho_1\,dx+\beta^{-1}\int \rho_2\log\rho_2\,dy.
\end{multline}
The equation can be viewed as the $W_2$ gradient flow in the space $\cP(\R^d)\times \cP(\R^d)$, namely
\[
\partial_t\rho_1=\nabla\cdot(\rho_1\nabla \frac{\delta \cF}{\delta\rho_1}),\quad \partial_t\rho_2=\nabla\cdot(\rho_2\nabla \frac{\delta \cF}{\delta\rho_2}).
\]
By similar argument, a stationary pair $(\rho_1, \rho_2)$ satisfies 
\begin{equation}
\begin{aligned}
\rho_1(x) = & Z_1^{-1}\exp\Big(-\beta\left(U_1(x)+W_1*\rho_1(x)+W_c*\rho_2(x)\right)\Big), \\
\rho_2(y) = & Z_2^{-1}\exp\Big(-\beta\left(U_2(y)+W_2*\rho_2(y)+W_c*\rho_1(y)\right)\Big),
\end{aligned}
\end{equation}
where $Z_i$'s are normalizing constants.

Our approach to investigate the convergence is observe that the Gibbs distribution  is symmetric about $(X_i, Y_i)$ and thus we will look at the convergence to the 
joint distribution $\rho_1(x)\rho_2(y)dxdy\in \cP(\R^d\times \R^d)$.
Using this idea, one may view $Z_i\equiv (X_i, Y_i)$ as a grand particle.
The interaction energy $\frac{1}{N}W_c(X_i, Y_i)$ can then be treated as the self-interaction. This formally can be reduced to the one species case. 
This viewpoint, however, cannot identify the independence between $x$ and $y$ in the mean field limit explicitly. 

\begin{remark}
Such a treatment clearly can only deal with the case when the particle numbers are the same for all species. If the numbers are different, such a treatment would not work.
\end{remark}

\subsection{The convergence result and the sketch of the proof}

We will use the treatment in the above section to obtain the quantitative convergence. 
In particular, we have the following claim.

\begin{theorem}\label{thm:convmulti}
Suppose that the interaction potentials are bounded and  
\begin{gather}\label{eq:betamultispecies}
\beta 2\sqrt{2}e(2\|W_c\|_{\infty}+\max(\|W_1\|_{\infty}, \|W_2\|_{\infty}))<1,
\end{gather}
Let $(\rho_1, \rho_2)$ be a stationary pair and define $\rho_1\rho_2(dx, dy):=\rho_1(x)\rho_2(y)dxdy$. Then for $N$ large enough, one has
\begin{gather}
\cH_N(\rho_N | (\rho_1\rho_2)^{\otimes N})\le \frac{C}{N}.
\end{gather}
\end{theorem}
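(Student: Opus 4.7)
The plan is to adapt the relative-entropy strategy of Theorem \ref{thm:main} to the two-species setting, with the product measure $(\rho_1\rho_2)^{\otimes N}$ playing the role of $\rho^{\otimes N}$. Using \eqref{eq:multirhoN} and the fixed-point equations for $(\rho_1,\rho_2)$, I compute
\[
\log\frac{\rho_N}{(\rho_1\rho_2)^{\otimes N}} = -\log \bar Z_N + N\log(Z_1 Z_2) + \beta(\Phi_1+\Phi_2+\Phi_c) + \beta c_N,
\]
where $c_N$ is a deterministic scalar and the three mean-zero fluctuations are
\[
\Phi_1 = -\frac{1}{2(N-1)}\sum_{i\neq j}\phi_1(X_i,X_j),\quad \Phi_2 = -\frac{1}{2(N-1)}\sum_{i\neq j}\phi_2(Y_i,Y_j),\quad \Phi_c = -\frac{1}{N}\sum_{i,j}\phi_c(X_i,Y_j),
\]
with $\phi_1,\phi_2$ the symmetrized fluctuation kernels as in \eqref{eqn:phi} for $W_1, W_2$, and $\phi_c(x,y):=W_c(x-y)-W_c*\rho_2(x)-W_c*\rho_1(y)+\int \rho_1(W_c*\rho_2)$. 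By symmetry of $W_c$ one checks $\int\phi_c(x,y)\rho_1(x)\,dx=0$ for every $y$ and $\int\phi_c(x,y)\rho_2(y)\,dy=0$ for every $x$, so each $\Phi_\alpha$ has vanishing mean under $(\rho_1\rho_2)^{\otimes N}$.

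Exactly as in the proof of Theorem \ref{thm:main}, the non-negativity of $\cH((\rho_1\rho_2)^{\otimes N}\mid\rho_N)$ combined with the vanishing means eliminates both the normalizing-constant and the $c_N$ contributions, leaving $\cH(\rho_N\mid(\rho_1\rho_2)^{\otimes N}) \leq \beta\int(\Phi_1+\Phi_2+\Phi_c)\rho_N\,d(\mathrm{x},\mathrm{y})$. The Fenchel-Young inequality of Lemma \ref{lem:integralcontrol} with $\eta=c\beta$ then reduces matters to bounding $\E_{(\rho_1\rho_2)^{\otimes N}}\exp(c\beta(\Phi_1+\Phi_2+\Phi_c))$ uniformly in $N$.

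For this exponential moment, the crucial observation is that $\Phi_1$ is a function only of the $X$'s, $\Phi_2$ only of the $Y$'s, and these two families are mutually independent under $(\rho_1\rho_2)^{\otimes N}$; hence $\E\exp(\eta(\Phi_1+\Phi_2))=\E\exp(\eta\Phi_1)\cdot\E\exp(\eta\Phi_2)$ and Lemma \ref{lmm:expcontrol} applies to each factor, giving finiteness whenever $\eta<1/(2\sqrt{2}e\max(\|W_1\|_\infty,\|W_2\|_\infty))$. For the bilinear cross term $\Phi_c$ I would repeat the martingale argument of Lemma \ref{lmm:expcontrol} in the joint filtration $\cF_k=\sigma(X_1,Y_1,\dots,X_k,Y_k)$ with the increments
\[
D_k = \sum_{j<k}[\phi_c(X_k,Y_j)+\phi_c(X_j,Y_k)]+\phi_c(X_k,Y_k),
\]
which form a martingale difference sequence precisely because $\phi_c$ satisfies both cancellation identities, and a direct check shows $\sum_{k=1}^N D_k=\sum_{i,j}\phi_c(X_i,Y_j)$. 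A nested Marcinkiewicz-Zygmund estimate — bounding $\|D_k\|_{L^p}$ by conditioning first on $(X_k,Y_k)$ (each of the inner sums then being a martingale over $j<k$ of $L^\infty$-bounded increments) and then on the outer sum — produces $\|\Phi_c\|_{L^p}\lesssim (p-1)\|W_c\|_\infty$, and Stirling's formula yields $\E\exp(\eta\Phi_c)<\infty$ for $\eta<1/(4\sqrt{2}e\|W_c\|_\infty)$. Splitting $\Phi_1+\Phi_2+\Phi_c$ through Hölder's inequality with the balanced conjugate pair $q=1+2\|W_c\|_\infty/\max(\|W_1\|_\infty,\|W_2\|_\infty)$, $q'=q/(q-1)$ matches the two admissible ranges simultaneously and recovers exactly the threshold \eqref{eq:betamultispecies}; taking $c>1$ inside this range gives $\cH(\rho_N\mid(\rho_1\rho_2)^{\otimes N})\leq C$, i.e.\ $\cH_N\leq C/N$.

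The principal obstacle is the new exponential moment bound for $\Phi_c$. The one-species proof of Lemma \ref{lmm:expcontrol} depends on a single filtration over like particles, whereas here each summand couples an $X$-particle to a $Y$-particle and the double sum runs over all ordered pairs (including $i=j$, which appears as the self-interaction term $\phi_c(X_k,Y_k)$ in $D_k$). The choice of filtration $\cF_k=\sigma(X_1,Y_1,\dots,X_k,Y_k)$ together with both cancellation identities is what preserves the martingale structure, and the double conditioning is what makes $\|\Phi_c\|_{L^p}$ grow only linearly in $p$ — the scaling required for a uniform exponential moment. The subsequent Hölder combination exploits the independence of the $X$- and $Y$-samples in the $\Phi_1+\Phi_2$ block, which is precisely why the stated threshold features $\max(\|W_1\|_\infty,\|W_2\|_\infty)$ rather than $\|W_1\|_\infty+\|W_2\|_\infty$.
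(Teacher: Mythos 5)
Your proof follows the same route as the paper: decompose $\log\big(\rho_N/(\rho_1\rho_2)^{\otimes N}\big)$ into the blocks $\Phi_1,\Phi_2,\Phi_c$, use nonnegativity of $\cH((\rho_1\rho_2)^{\otimes N}\mid\rho_N)$ to kill the normalizing constants, exploit independence of the $X$- and $Y$-samples under $(\rho_1\rho_2)^{\otimes N}$ to factor the $\Phi_1+\Phi_2$ exponential moment, and close $\Phi_c$ by a martingale argument adapted from Lemma~\ref{lmm:expcontrol} with the cross kernel $\phi_c$. Your single Fenchel--Young application followed by H\"older is equivalent to the paper's two Fenchel--Young applications with $1/c+1/c'<1$ and reproduces the threshold \eqref{eq:betamultispecies} exactly; your explicit inclusion of the diagonal term $\phi_c(X_k,Y_k)$ in the martingale increment $D_k$ is a small refinement that the paper's sketch (which only records $B_j^k=\phi_c(X_j,Y_k)+\phi_c(X_k,Y_j)$) leaves implicit.
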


The proof is similar to the proof of Theorem \ref{thm:main}. Here, we list out the key ingredients and omit the details. By noting that
\begin{multline}
(\rho_1\rho_2)^{\otimes N}=(Z_1Z_2)^{-N}
\exp\Bigg(-\beta\Big(\sum_{i=1}^N \left(U_1(x_i)+W_1*\rho_1(x_i)+W_c*\rho_2(x_i)\right)\\
+\sum_{j=1}^N\left(U_2(y_j)+W_2*\rho_2(y_j)+W_c*\rho_1(y_j)\right)\Big)\Bigg),
\end{multline}
we rewrite the $\rho_N$ in \eqref{eq:multirhoN} as
\begin{multline}
\rho_N=Z_N^{-1}
\exp\Bigg(-\beta\Big(E_N(\mathrm{x}, \mathrm{y}) +\frac{N}{2}\int_{\R^d} W_1*\rho_1(x)\rho_1(x)\,dx  \\
+\frac{N}{2}\int_{\R^d} W_2*\rho_2(y)\rho_2(y)\,dy +N\int_{\R^d}W_c*\rho_2(x)\rho_1(x)\,dx  \Big)\Bigg).
\end{multline}
The difference between $\bar{Z}_N$ and $Z_N$ is a constant depending on $N$ but not on $\mathrm{x}, \mathrm{y}$.

It is similarly computed that
\[
\cH((\rho_1\rho_2)^{\otimes N} | \rho_N)=\log\left(\frac{(Z_1Z_2)^{-N}}{Z_N^{-1}}\right)
=Z_N-N\log(Z_1 Z_2) \ge 0,
\]
and
\begin{multline*}
\cH(\rho_N | (\rho_1\rho_2)^{\otimes N})
=N\log(Z_1 Z_2)-Z_N+\\
 \beta \int_{\R^{Nd}\times \R^{Nd}}(\Phi_1(\mathrm{x},\mathrm{y})
+\Phi_2(\mathrm{x}, \mathrm{y})+\Phi_c(\mathrm{x},\mathrm{y})) \rho_N(d\mathrm{x}, d\mathrm{y}),
\end{multline*}
where
\begin{gather*}
\begin{split}
& \Phi_1(\mathrm{x}, \mathrm{y})
=\sum_{i=1}^N W_1*\rho_1(x_i)-\frac{1}{2(N-1)}\sum_{i\neq j}W_1(x_i, x_j)
-\frac{N}{2}\int W_1*\rho_1\rho_1(x)\,dx,\\
&  \Phi_2(\mathrm{x}, \mathrm{y})
=\sum_{j=1}^N W_2*\rho_2(y_j)-\frac{1}{2(N-1)}\sum_{i\neq j}W_2(y_i, y_j)
-\frac{N}{2}\int W_2*\rho_2\rho_2(y)\,dy,  \\
& \Phi_c(\mathrm{x}, \mathrm{y})
=\sum_{i=1}^N (W_c*\rho_2(x_i)+W_c*\rho_1(y_i))
-\frac{1}{N}\sum_{i,j}W_c(x_i, y_j)-N\int W_c*\rho_2 \rho_1 dx.
\end{split}
\end{gather*}

The treatment of $\Phi_1$ and $\Phi_2$ are the same as in Lemma \ref{lmm:expcontrol}. In particular, using Lemma \ref{lem:integralcontrol}, one has
\begin{multline*}
\beta \int_{\R^{Nd}\times \R^{Nd}}(\Phi_1(\mathrm{x},\mathrm{y})
+\Phi_2(\mathrm{x}, \mathrm{y}))\rho_N(d\mathrm{x}, d\mathrm{y})
\le \frac{1}{c}\cH(\rho_N | (\rho_1\rho_2)^{\otimes N}) \\
+\frac{1}{c}\log\left(\int \exp(c\beta(\Phi_1+\Phi_2)) (\rho_1\rho_2)^{\otimes N}   \right).
\end{multline*}
The integral of the exponential can be split into two integrals as in Lemma \ref{lmm:expcontrol}. Hence, the conditions are similar for this to be finite.

The cross term $\Phi_c$ is much simpler. In fact, we do not need the symmetrization technique in \eqref{eqn:phi}. One sets
\[
\phi_c(x_i, y_j)=W_c*\rho_c(x_i)+W_c*\rho_1(y_j)-W_c(x_i, y_j)
-\int_{\R^d} W_c*\rho_2 \rho_1\,dx
\]
such that $\sum_{i, j}\phi_c(x_i, y_j)=N\Phi_c$. One finds that $\phi_c$ satisfies
the required cancellation conditions.
Repeating the proof of Lemma \ref{lmm:expcontrol} with
\[
B_j^k=\phi_c(X_j, Y_k)+\phi_c(X_k, Y_j),
\]
one finds the following fact.
\begin{lemma}
Suppose that $W_c\in L^{\infty}$
and for $\eta<1/(4\sqrt{2}e \|W_c\|_{\infty})$, one has
\[
\sup_{N\ge2}\int \exp\left(\eta \frac{1}{N}\sum_{i, j}\phi_c(x_i, y_j) \right) 
(\rho_1\rho_2)^{\otimes N}(d\mathrm{x}, d\mathrm{y})<\infty.
\]
\end{lemma}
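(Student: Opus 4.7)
The plan is to imitate the martingale argument in the proof of Lemma \ref{lmm:expcontrol}, now along a filtration that groups the two species into a single pair per step. Set $S := \sum_{i,j=1}^N \phi_c(X_i, Y_j)$ (so the quantity to control is $\E\exp(\eta S/N)$) and $\cF_k := \sigma((X_1,Y_1),\ldots,(X_k,Y_k))$. The two marginal cancellations $\int\phi_c(x,y)\rho_1(x)\,\mathrm{d}x = \int\phi_c(x,y)\rho_2(y)\,\mathrm{d}y = 0$ imply that any pair $(i,j)$ with $\max(i,j) > k$ drops out of $\E[S\mid\cF_k]$, so I expect
\[
D_k := \E[S\mid\cF_k]-\E[S\mid\cF_{k-1}] = \phi_c(X_k,Y_k) + \sum_{j=1}^{k-1} B_j^k,
\]
with $B_j^k = \phi_c(X_j,Y_k)+\phi_c(X_k,Y_j)$ exactly as in the hint. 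The only structural novelty compared to Lemma \ref{lmm:expcontrol} is the diagonal self-interaction $\phi_c(X_k,Y_k)$, which appears because the double sum in $\Phi_c$ is unrestricted.

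Then I would apply the Marcinkiewicz--Zygmund inequality of \cite{lim2020quantitative} twice. The outer application yields $\|S\|_{L^p}^2 \le (p-1)\sum_k \|D_k\|_{L^p}^2$. For the inner one, note that conditionally on $(X_k,Y_k)$ the sequence $(B_j^k)_{j<k}$ is a martingale difference with respect to $\sigma(X_k,Y_k,(X_1,Y_1),\ldots,(X_j,Y_j))$, since integrating out $X_j$ (resp.\ $Y_j$) annihilates $\phi_c(X_j,Y_k)$ (resp.\ $\phi_c(X_k,Y_j)$); a second M--Z gives $\|\sum_{j<k} B_j^k\|_{L^p}^2 \le (p-1)(k-1)\|B_1^2\|_{L^p}^2$. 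Combining with $\|\phi_c\|_\infty \le 4\|W_c\|_\infty$ produces
\[
\|S\|_{L^p}^2 \le 2(p-1)N\|\phi_c\|_\infty^2 + (p-1)^2 N(N-1)\|B_1^2\|_{L^p}^2.
\]
Expanding $\E\exp(\eta S/N) \le 1 + \sum_{p\ge 2}(\eta/N)^p\|S\|_{L^p}^p/p!$ and invoking Stirling $(p-1)^p/p!\lesssim e^{p-1}/\sqrt{p}$ as in Lemma \ref{lmm:expcontrol} produces an $N$-uniform bound once $\eta$ is below a multiple of $1/(e\|W_c\|_\infty)$; to reach the announced $1/(4\sqrt{2}\,e\|W_c\|_\infty)$ one exploits the conditional independence of the two summands in $B_j^k$ (given $(X_k,Y_k)$ they depend only on $X_j$ and $Y_j$ separately) to refine the relevant $L^p$-norm of $B_1^2$ by a factor of $\sqrt{2}$.

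The main obstacle is less conceptual than combinatorial. First, one must check that the diagonal contribution $\phi_c(X_k,Y_k)$, which has no analogue in the one-species proof (where the particle sum was restricted to $i\neq j$), affects $\|S\|_{L^p}^2$ only at the subleading order $O(N)$ and not at the dominant order $O(N^2)$ coming from the $B_j^k$-piece, so that the radius of convergence of the exponential series is not spoiled. Second, since the cross-species sum runs over \emph{ordered} pairs with prefactor $1/N$ rather than the unordered $1/(N-1)$ of the one-species case, the combinatorial ratio becomes $\sqrt{N(N-1)}/N \to 1$ instead of $\sqrt{N(N-1)/2}/(N-1) \to 1/\sqrt{2}$; the lost $\sqrt{2}$ has to be recovered precisely through the conditional-independence sharpening above in order to match the stated constant.
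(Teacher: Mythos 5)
Your martingale setup is the right one and matches what the paper intends: with $\mathcal{F}_k=\sigma((X_1,Y_1),\ldots,(X_k,Y_k))$, the increment is indeed $D_k=\phi_c(X_k,Y_k)+\sum_{j<k}B_j^k$ with $B_j^k=\phi_c(X_j,Y_k)+\phi_c(X_k,Y_j)$, and you are right that the paper's sketch (which reads as $D_k=\sum_{j<k}B_j^k$ alone, so that $\sum_kD_k=\sum_{i\neq j}\phi_c(X_i,Y_j)$) quietly drops the diagonal. Separating the diagonal is easy though: $\frac1N\sum_i\phi_c(X_i,Y_i)$ is deterministically bounded by $\|\phi_c\|_\infty\le 4\|W_c\|_\infty$, so one can factor $e^{\eta\|\phi_c\|_\infty}$ out of the exponential (or use H\"older with exponents $q,q'$ arbitrarily close to $1,\infty$, which costs nothing since $\eta$ is strictly sub-critical). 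There is no need to fold it into the quadratic bookkeeping.

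The place where your argument actually goes wrong is the combinatorial accounting. Using $(a+b)^2\le 2a^2+2b^2$ to write $\|D_k\|_{L^p}^2\le 2\|\phi_c\|_\infty^2+2(p-1)(k-1)\|B_1^2\|_{L^p}^2$ inflates the leading coefficient by a factor $2$: after summing you get $(p-1)^2N(N-1)\|B_1^2\|_{L^p}^2$ instead of the correct $(p-1)^2\tfrac{N(N-1)}{2}\|B_1^2\|_{L^p}^2$. Your explanation — that the ordered double sum gives $N(N-1)$ rather than $N(N-1)/2$ pairs — is not the real source of that factor and is in fact incorrect: precisely because you have already paired $\phi_c(X_j,Y_k)$ with $\phi_c(X_k,Y_j)$ inside $B_j^k$, the inner sum runs over $j<k$ and the count is $\sum_k(k-1)=N(N-1)/2$, exactly as in the one-species proof. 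With the diagonal pulled out as above (or with a sharp $(1+\varepsilon)$-Young inequality, $\varepsilon\to 0$), the ratio is $\sqrt{N(N-1)/2}\,/\,N=\sqrt{(N-1)/(2N)}\le 1/\sqrt 2$ for all $N\ge 2$ — which also explains why the paper writes $\sup_{N\ge 2}$ here rather than the $\limsup$ of Lemma \ref{lmm:expcontrol}, since $1/N$ replaces $1/(N-1)$.

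The proposed recovery of the lost $\sqrt2$ — a ``conditional-independence sharpening'' of $\|B_1^2\|_{L^p}$ — does not work in the form stated. In the final series one needs $\limsup_p\|B_1^2\|_{L^p}=\|B_1^2\|_{L^\infty}\le 2\|\phi_c\|_\infty\le 8\|W_c\|_\infty$, and conditional independence of the two summands cannot shave a $\sqrt2$ off an $L^\infty$ bound of a fixed sum. (If you genuinely want to exploit the independence of $X_j$ and $Y_j$ you should refine the filtration — reveal $X_j$ and $Y_j$ at separate steps — which replaces the inner Marcinkiewicz--Zygmund bound $(p-1)(k-1)\|B_1^2\|_{L^p}^2$ by $(p-1)\cdot 2(k-1)\|\phi_c\|_\infty^2$; but this actually gives a \emph{better} constant, $\eta<1/(4e\|W_c\|_\infty)$, than the one the lemma states, so it is not what the paper does.) The paper's constant simply comes from keeping the same combinatorial factor $1/\sqrt2$ as in the one-species case and observing that $\|B_j^k\|_\infty$ has doubled from $4\|W\|_\infty$ to $8\|W_c\|_\infty$, giving $\eta<\sqrt2/(8e\|W_c\|_\infty)=1/(4\sqrt2\,e\|W_c\|_\infty)$. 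In short: the skeleton of your proof is right, but you should drop the lossy $2a^2+2b^2$ step (handle the diagonal separately) and delete the $L^p$-refinement claim, which is both unnecessary and unsound.
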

Here, the $\limsup$ is replaced by $\sup_{N\ge 2}$ because 
there is no discrepancy between $N-1$ and $N$.
Moreover, the coefficient is now $4\sqrt{2}$ instead of $2\sqrt{2}$ because
in construction of the martingale difference $B_j^k$, there are more terms.

The condition \eqref{eq:betamultispecies} is required because we used
\begin{gather*}
\beta \int_{\R^{Nd}\times \R^{Nd}} \Phi_c(\mathrm{x},\mathrm{y}) \rho_N(d\mathrm{x}, d\mathrm{y})
\le \frac{1}{c'}\cH(\rho_N | (\rho_1\rho_2)^{\otimes N})
+\frac{1}{c'}\log\left(\int \exp(c'\beta \Phi_c) (\rho_1\rho_2)^{\otimes N}   \right)
\end{gather*}
and need $1/c+1/c'<1$.

\section{Numerical experiments}
\label{sec:numer}

According to Theorem \ref{thm:main}, Theorem \ref{thm:convmulti} and Corollary \ref{cor:sobolevestimate}, both the one marginal of the $N$-body Gibbs measure and the empirical measure constructed from the samples converge to the solution of the stationary Mckean-Vlasov equation. 
This then justifies our motivation to use sampling methods to solve the stationary nonlinear Fokker-Planck equations. 
Upon adequately sampling from the Gibbs measure \eqref{eqn:Gibbsnbody}, one can obtain a good approximation to the solution of the stationary Mckean-Vlasov equation.

To sample from the Gibbs distribution like \eqref{eqn:Gibbsnbody}, standard Markov Chain Monte Carlo method, like the Langevin MC \cite{roberts1996exponential} or the direct Metropolis-Hastings (MH) algorithm \cite{metropolis1953equation} are not convenient as $W$ could be singular and the complexity for moving one particle is $\mathcal{O}(N)$. The random batch Monte Carlo (RBMC) method proposed in \cite{li2020random} can overcome these two difficulty, where the singularity is treated by a kernel splitting strategy while the complexity is reduced to $\mathcal{O}(1)$ by a random batch strategy \cite{jin2020random}.

Let us briefly describe RBMC here. We decompose the singular interacting potential $W$ into \(W=W_1+W_2\), where $W_2$ is singular and of short range while $W_1$ is the remaining part which is is smooth and often of long range. Then, $W_1$ is used in the Langevin dynamics step, while $W_2$ is used in the Metropolis rejection step. Regarding the complexity, consider only the interactions within a randomly selected mini batch, the Langevin step is $\mathcal{O}(1)$.
Besides, Metropolis rejection step is also practically $\mathcal{O}(1)$ since $W_2$ is local. The RBMC method is summarized in Algorithm \ref{algo: RBMC}.
\begin{algorithm}[H]
\caption{Random Batch Monte Carlo method}
\begin{algorithmic}[1]
\STATE Split $W=W_1+W_2$. Generate $N$ initial particles $X_i, i=1,\cdots,N$. Choose $m\geq 1$, batch size $p>1$, number of iterations in the burn in phase and sampling phase $N_b, N_s$.
\FOR{$n=1:N_b+N_s $}
\STATE Randomly select a particle $X_i$ with uniform probability.
 \FOR{$k=1:m$}
 \STATE Choose $\xi_{k}$ (random set not containing $i$), $z_k\sim N(0,I_d)$, $\tau_k>0$.
	$$
	X_i=X_i-\left(\nabla U(X_i)+\frac{1}{p-1}\sum_{j\in \xi_k}\nabla W_1 (X_i-X_j)\right) \tau_k +\sqrt{\frac{2\tau_k}{\beta}} z_k.
	$$
 \ENDFOR
	\STATE Compute the acceptance ratio using cell list or other data structures:
	$$
	a(X_i^*,X_i) =\min\left\{1,\exp\left(-\beta\sum_{j:j\neq i}\left( W_2 (X_i^*-X_j)- W_2 (X_i-X_j)\right)\right) \right\}.
	$$
 Accept $X_i^*$ with probability $a(X_i^*,X_i)$.
 
 \ENDFOR
\end{algorithmic}
\label{algo: RBMC}
\end{algorithm}

In this section, we will first consider the Poisson-Boltzmann equation where two types of ions are considered. Note that the interaction kernel is not bounded so the theory will not apply but the numerical results still agree with the theory. As a second example, we look at using sampling for training the two-layer neural networks.

\subsection{The Poisson-Boltzmann equation}\label{subsec:pb}

The PB equation \cite{guoy1910constitution,chapman1913li} is a classical implicit solvent model describing the electric potential at equilibrium, widely used in electrochemistry. Consider a charged colloid $\Omega$ immersed in some symmetric monovalent electrolyte. Then the ions (two types with valences $z_+=1, z_-=-1$) will concentrate close to the surface of $\Omega$ and form a screening layer. At equilibrium, the charge distribution outside the charged surface is given by the Boltzmann distribution, i.e., 
$$\rho_+\propto \exp(- \beta \phi),\quad \rho_-\propto \exp(\beta \phi).$$ Here $\phi$ denotes the electric potential.
The concentration of both anions and cations are $\rho_{\infty}$ in the far field. The potential then satisfies
\[
-\varepsilon \Delta \phi = 
\begin{cases}
\rho_{\infty} (e^{-\beta \phi}-e^{\beta \phi}), & x\in \Omega^c,\\
\rho_f, & x\in \Omega.
\end{cases}
\]
Here $\varepsilon$ is the dielectric constant. The equation in $\Omega^c$ is the PB equation. 

To do simulation, we truncate the external solution into bounded domain $D_L=B(0,L)\backslash \Omega$ and prescribe a Neumann boundary condition 
$$
-\frac{\partial \phi}{\partial n} = 0, \quad x\in \partial B(0,L).
$$
It has been shown that this approximation problem converges to the PB equation for $L$ large enough \cite{li2022some}.

To solve the PB equation by sampling method, we consider the corresponding particle system where the interaction potential between the particles is the Coulomb potential. 
Given the free charge $\rho_f$, we adjust the total charge of the cations $Q_+$ so that the total charge of the anions satisfy
\[
Q_+-Q_-+\int \rho_f dx=0.
\]
We choose a numerical charge unit $q$ such that the numbers of numerical cations and anions are given respectively by
\[
N_+=\lceil \frac{Q_+}{z_+q}\rceil,\quad N_-=\lceil\frac{Q_-}{|z_-|q}\rceil.
\] 
Note that here $N_+\neq N_-$ but if we decrease $q$, $N_+$ and $N_-$ will increase proportionally so that the proof in section \ref{sec:multispecies} can be applied similarly.  Let $W$ be the fundamental solution to 
\[
-\varepsilon \Delta W=\delta.
\]
Then the external potential is given by $U = \rho_f*W$ is the external potential and the Gibbs distribution can be written as
\begin{gather}
\rho_N\propto \exp\left(-\beta(\sum_i z_i U(x_i)+\frac{1}{2}\sum_{i\neq j} z_i z_j q W(x_i-x_j))\right)1_{\{x_i\in D_L, 1\le i\le N\}},
\end{gather}
where $z_i$ is the valence of $i$-th particle. Note that the Hamiltonian used 
is scaled by $1/q$ so that the mean field limit exists, consistent with Remark \ref{rmk:rescaledHam}. In fact, if we consider the corresponding Langevin equation, one has
\[
dX_i=-z_i(\nabla U(x_i)+\sum_{j\neq i}qz_j \nabla W(x_i-x_j))\,dt+\sqrt{2\beta^{-1}}dW_i+dR_i,
\]
where $dR_i$ is the reflection at the boundary (see \cite{li2022some}). The formal mean field limit is 
\[
\begin{split}
\partial_t\rho_+ &=\nabla\cdot(\rho_+(\nabla U(x)+\nabla W*\rho_+-\nabla W*\rho_-))+\beta^{-1}\Delta\rho_+,\quad x\in D_L\\
\quad \partial_t\rho_-&=-\nabla\cdot(\rho_-(\nabla U(x)+\nabla W*\rho_+-\nabla W*\rho_-))+\beta^{-1}\Delta\rho_-, \quad x\in D_L.
\end{split}
\]
This is the system of PNP equations, with the limit to be the PB equation. 

For 1D case, $W(x)=(2\varepsilon)^{-1}|x|$, and $\nabla W$ is bounded, the theory above can be applied. In fact, though the domain here is bounded, the proof can be performed without difficulty. 
For 3D case, $W(x)=\frac{1}{4\pi \varepsilon |x|}$, the theory above do not apply. If we use this potential directly in the Gibbs distribution, it is not integrable due to the singularity between one anion particle and one cation particle. One is adding some hard sphere repulsive potential to avoid this attraction and cancellation. For example, the Lennard-Jones potential \cite{jones1924determination} given by
\begin{equation}
	\label{eq:Lennard-Jones}
	U_{\mathrm{LJ}}(r)=4\epsilon\left[\left(\frac{\sigma}{r}\right)^{12}-\left(\frac{\sigma}{r}\right)^6\right],
\end{equation}
where $\epsilon$ is the depth of the potential well (i.e. the strength of Lennard-Jones potential), $\sigma$ is the finite distance at which the inter-particle potential is zero. The parameters $\epsilon, \sigma$ vanish in the mean field limit so that the effects of the Lennard-Jones does not appear in the PB equation.
Another approach is to take a cutoff of $W$ that vanishes as $N \to \infty$. For example, 
$$
\bar{W}(r)=\left\{
\begin{aligned}
     & \frac{1}{8\pi \varepsilon r_N}\left(3-\frac{r^2}{r_N^2}\right), \quad & 0<r<r_N,\\
   & \frac{1}{4\pi \varepsilon r},\quad  & r>r_N,
\end{aligned}
\right.$$
where $r_N=N^{-\gamma(d)}$ for some $\gamma(d)>0$. Clearly, $|\bar{W}| \leq \frac{3}{8\pi \varepsilon r_N}=\frac{3}{8\pi \varepsilon } N^{\gamma}$. $\bar{W}$ converges to $W$ as $N\to \infty$.
Either with the hard sphere potential or the cutoff, the Gibbs measure in 3D then can be normalized to a probability measure. Though our theory does not apply due to the singularity, we can still test the performance numerically nevertheless.

For 1D case, the particles can be imagined as charged sheets in 3D and the Green's function does not blow up at $x=0$. there is no need to do splitting in the RBMC. That means we run a particle using the random minibatch version of a Langevin equation and then accept the proposal totally. This is in fact a modified version of the stochastic gradient Langevin dynamics (SGLD) \cite{welling2011bayesian}, but we are allowed to move several particles only in each iteration. When the particles fall out of the domain considered, we do simple reflection.
\begin{figure}
    \centering
    \includegraphics[width=\textwidth]{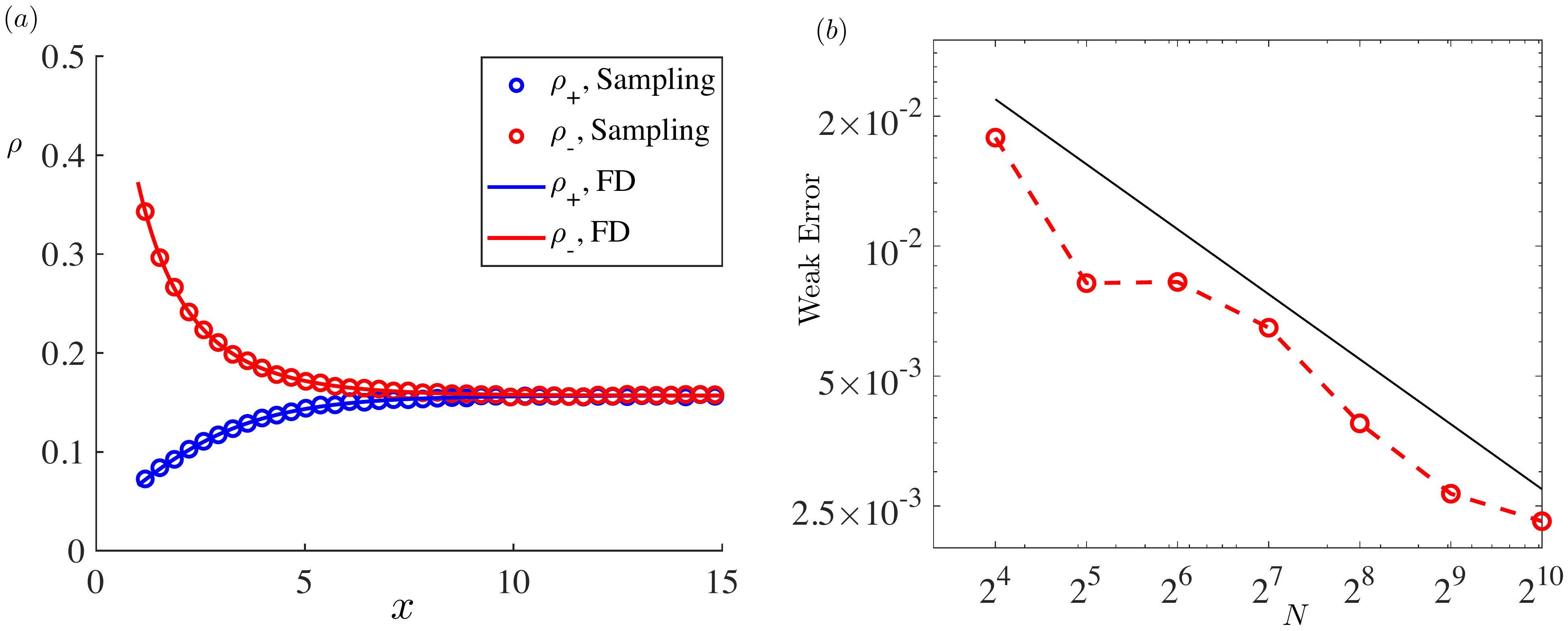}
    \caption{Solving the 1D PB equation using sampling. (a). The density distribution obtained by RBMC sampling method versus finite difference method. The parameters are detailed in the context. (b) the mean square weak error versus particle number for positive charge (number for negative charge is scaled proportionally). The $1/2$ rate is observed.}
    \label{fig:1d}
\end{figure}

In Figure \ref{fig:1d} (a), we present the simulation results for solving the 1d PB equation with sampling. The particles are all put in $(1, L)$ with $L=15$ in the experiment and a free charge $Q_f$ is put at $x=0$ (for 1D case, the location of the free charge in $(-\infty, 1)$ is not important).   Here, $\varepsilon=1$, $Q_f=0.5$, $Q_+=2$, $Q_-= 2.5$, and  $N_+=1024$ so that $q=Q_+/N_+$.  We apply  RMBC without splitting for sampling. Since there is no rejection, we move the $N$ particles. The batch size is $p=4$, step size $\tau=0.005$, burn in time $T_b=500$ (i.e, after $T_b/\tau$ steps) and stop time $T_e=2000$.  

In Figure \ref{fig:1d} (b), we show the relative weak error of the sampling method versus the particle number of cations $N_+$  used in the Gibbs distribution.
In one experiment, for each given $N_+$, we collect $N_s=4.8\times 10^6$ samples (i.e., $4.8\times 10^6/N$ configurations) to compute 
\begin{equation}
\mathrm{err}:=\left|\int{f(x)\rho(x)\,dx}-\frac{1}{N_s}\sum_{i=1}^{N_s}{f(x_i)}\right| / \int{f(x)\rho(x)\,dx}.
\end{equation}
Such a quantity is used to approximate $\E \int \varphi d\mu_N$ for the empirical measure $\mu_N$.  The reference solution $\rho$ is computed by the finite difference method as explained in \cite{li2022some}. The test function used is $f(x)=x^2$ and the errors for the positive and negative particles are computed separately and we repeat the experiments for $M=32$ times to get the error mean square weak error (MSWE)
\begin{equation}
\mathrm{MSWE} = \sqrt{\frac{\sum \mathrm{err}_+^2 + \sum \mathrm{err}_-^2}{2M}}.
\end{equation}
The results are shown for $N_+=16, 32, 64, 128, 256, 512, 1024$. Clearly, the convergence rate is $1/2$ which is consistent with the result $\|\mu_N-\mu\|_{H^{-s}}\sim N^{-1/2}$.

For the 3D case, to run the RBMC algorithm, we let $r=|x|$ and split the singular Coulomb potential as
$$
W_1(r)=\left\{
\begin{aligned}
	&\frac{1}{4\pi\varepsilon}\left[-\frac{1}{r_c^2}(r-r_c)+\frac{1}{r_c}\right],\quad &0<r\leq r_c,\\
	&\frac{1}{4\pi\varepsilon}\frac{1}{r}, & r>r_c,
\end{aligned}
\right.
$$
and
$$
W_2(r)=\left\{
\begin{aligned}
	&\frac{1}{4\pi\varepsilon}\left[\frac{1}{r}+\frac{1}{r_c^2}(r-r_c)-\frac{1}{r_c}\right],\quad & 0<r\leq r_c,\\
	& 0,& r>r_c.
\end{aligned}
\right.
$$
\begin{figure}
    \centering
    \includegraphics[width=7cm]{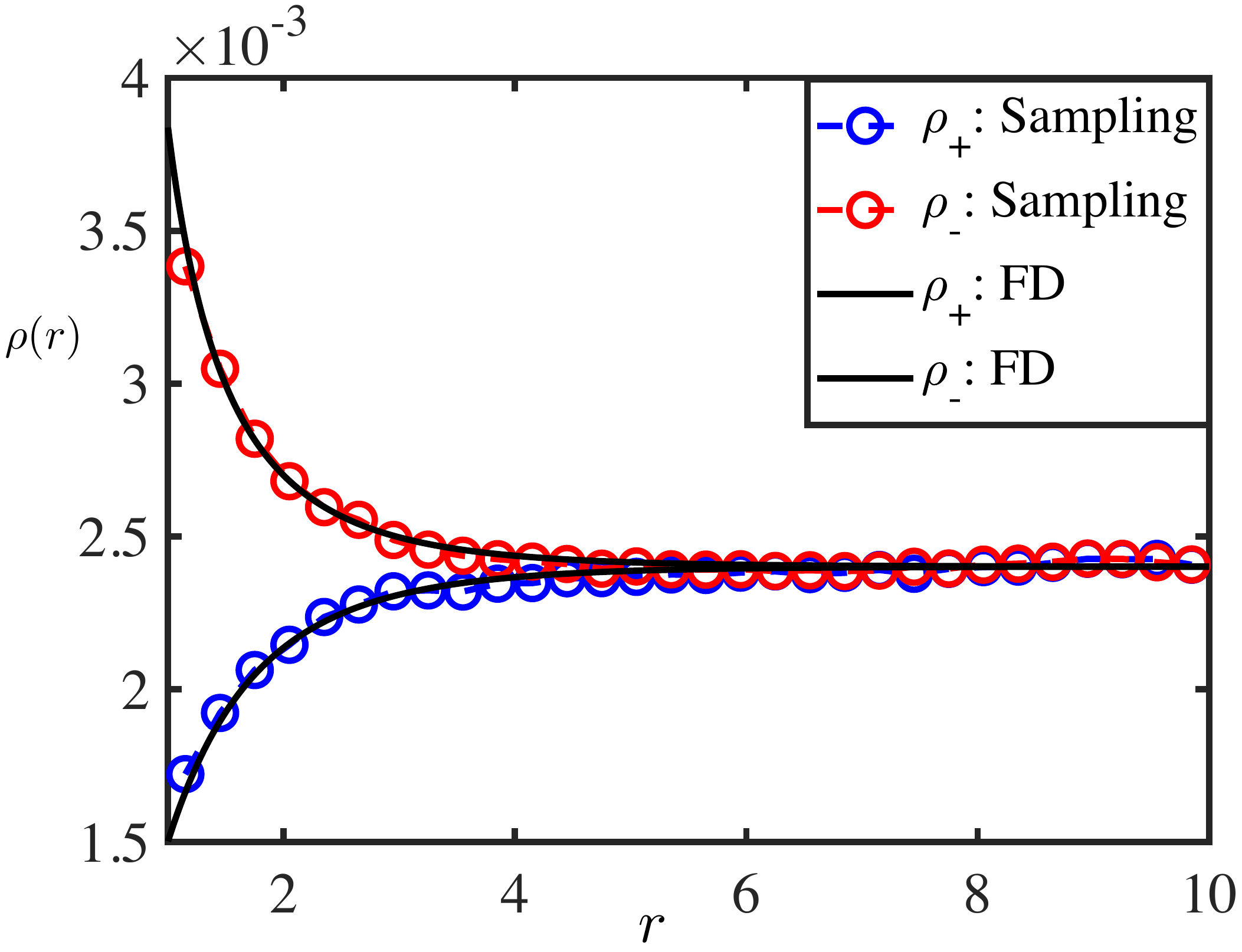}
    \caption{Solving 3D PB equation by RBMC sampling. The LJ potential is used to overcome the singularity of 3D Coulomb potential. }
    \label{fig:3d}
\end{figure}
In Figure \ref{fig:3d}, we present the simulation results for $\varepsilon=0.01$, $Q_f=10\varepsilon$ centered at the origin, $Q_+=10$. Here, $\rho_{\pm}(r)$ are the 3D densities such that $\int \rho_+(r)4\pi r^2 dr=Q_+$. The cell is $B(0, 1)$ and $Q_f$ is put at the origin. The domain is truncated at $L=10$. In the RBMC sampling method, we use $N_+=100$ particles for cations so that $q=0.1$. The cut-off radius of the Coulomb potential is chosen as $r_c=0.1$, while the zero and strength of LJ potential is $\sigma=r_c=0.1$ and $\epsilon=0.01$ respectively. We move one particle each step with batch size $p=2$, steps per iteration $m=9$,  step size $\tau=0.01$. Also, when particles fall out of the domain, a simple reflection is performed.
The numbers of iterations in the burn-in phase and the sampling phase are $N_b=5\times 10^5$ and $N_s=5\times 10^7$. To compute the density distributions at equilibrium, we collect the samplers every $1000$ iterations in the sampling phase. Hence, the samplers used for cations and anions is $N_s*N_+/1000=5\times 10^6$ and $N_s*N_-/1000=5.05\times 10^6$. 
Clearly, we can see from Fig. \ref{fig:3d} that the RBMC sampling result matches well with the reference solution, given by the finite difference method as mentioned in \cite{li2022some}.

\subsection{Two layer neural networks}

Following \cite{mei2018mean,rotskoff2018trainability}, we consider in this subsection the supervised learning task to fitting data of the form $\{(x, y)\}$ using a two-layer neural network of the following form
\begin{equation}\label{eq:twonn}
\hat{y}(x;\boldsymbol\theta)=\frac{1}{N}\sum_{i=1}^N\sigma_*(x;\theta_i).
\end{equation}
Here, $\hat{y}(x;\boldsymbol\theta)$ represents the output of the network for input $x$, and $\sigma_*(x;\theta_i)$ is a neuron with parameter $\theta_i$. Often 
\begin{gather}\label{eq:sigmastar}
\sigma_*(x,\theta)=c\sigma(w^Tx+b), \quad \theta=(c, w, b),
\end{gather}
and $\boldsymbol\theta=(\theta_1,\theta_2,\cdots, \theta_N)$. Hence, the model \eqref{eq:twonn} is a two-layer neural network with the hidden layer to be $x\mapsto w^Tx+b$, and the output layer is $(\sigma_i)\mapsto \frac{1}{N}\sum_i c_i \sigma_i$ after the activation. The mean square loss function is given by
\begin{gather}\label{eq:nnmse}
R(\boldsymbol\theta)=\frac{1}{2}\mathbb{E}_{x,y}|\hat{y}(x;\boldsymbol\theta)- y|^2.
\end{gather}
If we are given $P$ training data $\{(x_i, y_i)\}$, then
\begin{gather}\label{eq:nnempmse}
\frac{1}{2}\mathbb{E}_{x,y}|\hat{y}(x;\boldsymbol\theta)- y|^2
=\frac{1}{2P}\sum_{j=1}^P |\hat{y}(x_j;\boldsymbol\theta)- y_j|^2.
\end{gather}
If we have the task for fitting the distribution of the data, the expectation is then taken over the population distribution, which is the law of $(x, y)$ as $P\to\infty$.
We will use $\E$ to mean the expectation over $(x, y)$, either over the training data or over the population distribution.

An important observation in \cite{mei2018mean,rotskoff2018trainability} is that the loss function can also be written as
\begin{gather}
R(\boldsymbol\theta)=\frac{1}{2 N^2}\sum_{i, i'}W(\theta_i, \theta_i')
+\frac{1}{N}\sum_{i=1}^N U(\theta_i)+\frac{1}{2}\E|y|^2
=: E(\mu_N)+C, 
\end{gather}
where
\begin{gather}
E(\mu):=\int U \mu(d\theta)+\frac{1}{2}\int W(\theta, \theta')\mu(d\theta)\mu(d\theta'),
\quad 
\mu_N=\frac{1}{N}\sum_{i=1}^N \delta(\cdot-\theta_i),
\end{gather}
and $U$ and $W$ are given respectively by
\begin{equation}
U(\theta) =-\mathbb{E}[y\sigma_*(x,\theta)],\quad
W(\theta,\theta')=\mathbb{E}[\sigma_*(x;\theta)\sigma_*(x;\theta')].
\end{equation}
This indicates that the loss function is in fact an energy functional of the empirical measure of the parameters of the neurons.

The noisy stochastic gradient descent (SGD) algorithm is often used to train the neuron networks \cite{mei2018mean}
\begin{equation}\label{eq:noisysgd}
\theta_i^{k+1}=\theta_i^k-\lambda s_k\theta_i^k+s_k(y_k-\hat{y}_k)\nabla_{\theta_i}\sigma_*(x_k;\theta_i^k)+\sqrt{2s_k/\beta}z_i^k,
\end{equation}
where $z_i^k\sim\mathcal{N}(0, I_D)$ where $D$ is the dimension of $\theta_i$, $\hat{y}_k=\hat{y}(x;\boldsymbol{\theta^k})$. Here, $\lambda\theta_i^k$ comes from an $\ell^2$-regularization \cite{mei2018mean}. Note that the coefficients used here are slightly different from there because we think the current version is more natural. 
This update rule is clearly an approximation of 
\[
\theta_i^{k+1}=\theta_i^k-\lambda s_k\theta_i^k-s_k \nabla_{\theta_i}(NR(\boldsymbol{\theta^k}))+\sqrt{2s_k/\beta}z_i^k,
\]
where only one sample $(x_k, y_k)$ is used to replace the expectation over the distribution of the data in $R(\theta)$, and this is the mini-batch idea in SGD. Of course, several samples could be used. 
Note that the rescaled loss (or energy functional) $NR(\boldsymbol{\theta^k})$ is used in consistent with Remark \ref{rmk:rescaledHam}. This is the discretization of 
\[
d\theta_i=-\lambda \theta_i-\nabla_{\theta}(NR(\boldsymbol{\theta}))\,dt+
\sqrt{2\beta^{-1}}\,dB_i.
\]
Clearly the mean field limit of this interacting particle system would be the following nonlinear Fokker-Planck equation
\begin{gather}\label{eq:nonfpnn}
\partial_t\rho=\nabla_{\theta}\cdot(\rho \nabla \Psi_{\lambda})+\beta^{-2}\Delta\rho,
\end{gather}
where
\[
\Psi_{\lambda}=U(\theta)+\int W(\theta, \theta')\rho(d\theta')
+\frac{\lambda}{2}|\theta|^2=\frac{\delta}{\delta\rho}\left(E(\rho)+\lambda\int \frac{|\theta|^2}{2}\rho(d\theta)\right).
\]
This mean field limit has been justified in a probabilistic sense in \cite{mei2018mean}. Clearly, for the $N$-neuron system, the stationary distribution is 
\begin{gather}\label{eq:gibbsnn}
\rho_N\propto \exp\left(-\beta \Big(\sum_i (U(\theta_i)+\frac{\lambda}{2}|\theta_i|^2)+\frac{1}{N}\sum_{i, j}W(\theta_i, \theta_j)\Big)\right).
\end{gather}
As proved in \cite{mei2018mean}, the stationary solution of the nonlinear Fokker-Planck equation \eqref{eq:nonfpnn} is close to the global minimizer of the two layer neural networks with infinite width, and has nearly zero training loss.

Although our theory above is for $W(x, y)=W(x-y)$, the proof can be carried here without difficulty for general symmetric $W$. We thus conclude that
\begin{corollary}
Suppose that $W$ is bounded under suitable choices of activation functions and the the domains for $\theta$. When $\beta$ is not very big, as the width of neural network $N\to\infty$, the empirical measure $\mu_N$ converges to the stationary solution $\rho$ in $H^{-s}$ for $s>D/2$ almost surely. Moreover, the neural network converges to the nearly optimal predictor $\int \sigma_*(x, \theta)\rho(d\theta)$ for all $x$.
\end{corollary}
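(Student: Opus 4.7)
The plan is to reduce this corollary to the already-established Theorem \ref{thm:main} and Corollary \ref{cor:sobolevestimate}, applied to the parameter space $\R^D$ of a single neuron. The Gibbs measure \eqref{eq:gibbsnn} fits the framework of Section \ref{sec:main} with the external potential $U(\theta)+\tfrac{\lambda}{2}|\theta|^2$ (which is admissible so long as $\exp(-\beta E_N)\in L^1$, and the quadratic confinement ensures this even when $U$ and $W$ are merely bounded) and a symmetric interaction kernel $W(\theta,\theta')$ that is not of pure convolution form. I would first verify that the proofs in Section \ref{sec:main} do not use the convolution structure in an essential way: the symmetrized function $\phi(\theta,\theta')=\tfrac{1}{2}(W\ast\rho)(\theta)+\tfrac{1}{2}(W\ast\rho)(\theta')-\tfrac{1}{2}W(\theta,\theta')-\tfrac{1}{2}\int\rho (W\ast\rho)$, with $W\ast\rho(\theta)$ reinterpreted as $\int W(\theta,\theta')\rho(d\theta')$, still satisfies the two cancellation conditions required by Lemma \ref{lem:largedeviation}, purely from the symmetry of $W$. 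Consequently, Lemma \ref{lmm:expcontrol} and Theorem \ref{thm:main} transport verbatim and yield $\mathcal{H}_N(\rho_N\mid\rho^{\otimes N})\le C/N$ for $\beta$ small.

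Next, I would apply Corollary \ref{cor:sobolevestimate} (with $d$ replaced by $D$) to obtain $\E\|\mu_N-\rho\|_{H^{-s}}^2\le C/N$ for every $s>D/2$. To upgrade this to almost sure convergence, I would extract the exponential moment bound that is actually present in the proof of Corollary \ref{cor:sobolevestimate}: for some $\eta>0$ independent of $N$, one has $\E\exp(\eta N\|\mu_N-\rho\|_{H^{-s}}^2)\le C$. By Markov's inequality,
\begin{equation*}
\mathbb{P}\bigl(\|\mu_N-\rho\|_{H^{-s}}^2> \tfrac{2\log N}{\eta N}\bigr)\le \frac{C}{N^{2}},
\end{equation*}
which is summable in $N$. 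Borel--Cantelli then gives $\|\mu_N-\rho\|_{H^{-s}}\to 0$ almost surely, with rate $O(\sqrt{\log N/N})$.

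For the second assertion, I fix $x$ and write
\begin{equation*}
\hat y(x;\boldsymbol\theta)-\int\sigma_\ast(x,\theta)\rho(d\theta)=\langle \sigma_\ast(x,\cdot),\mu_N-\rho\rangle\le \|\sigma_\ast(x,\cdot)\|_{H^{s}}\,\|\mu_N-\rho\|_{H^{-s}}.
\end{equation*}
Under the standing hypothesis that the activation function and parameter domain are chosen so that $\sigma_\ast(x,\cdot)\in H^{s}(\R^D)$ for each $x$ (for example $\tanh$ composed with a bounded affine map after restricting $\theta$ to a compact domain via a smooth cutoff or confining regularizer; boundedness of $W$ imposes essentially the same constraint), this norm is finite, and the right-hand side tends to zero almost surely by the previous step. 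Pointwise convergence in $x$ follows.

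The main obstacle is the almost-sure statement: the in-expectation bound $C/N$ from Corollary \ref{cor:sobolevestimate} is not by itself summable, so the argument genuinely needs the exponential moment hidden inside its proof, which is why I must re-enter that proof rather than use the corollary as a black box. A secondary subtlety is justifying $\sigma_\ast(x,\cdot)\in H^{s}(\R^D)$ uniformly in $x$ together with boundedness of $W(\theta,\theta')=\E[\sigma_\ast(x;\theta)\sigma_\ast(x;\theta')]$; this is the reason for the qualifier ``suitable choices of activation functions and domains for $\theta$'' in the statement, and in practice is achieved by combining a smooth bounded activation with either a bounded parameter domain or the $\ell^2$ regularization already present in $\Psi_\lambda$.
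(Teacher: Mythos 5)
Your overall strategy --- extending Section \ref{sec:main} to a general symmetric kernel on $\R^D$, then combining Theorem \ref{thm:main} with Corollary \ref{cor:sobolevestimate} --- is the same as the paper's, and your observation that the cancellation conditions of Lemma \ref{lem:largedeviation} rely only on the symmetry of $W$, not on its convolution structure, is exactly the right point to verify. However, there are two places where your argument departs from what is actually available, and one place where you take a genuinely different route from the paper.

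\textbf{The exponential moment is stated under the wrong measure.} You claim to ``extract'' from the proof of Corollary~\ref{cor:sobolevestimate} the bound $\E\exp(\eta N\|\mu_N-\rho\|_{H^{-s}}^2)\le C$, where $\E$ must be expectation under $\rho_N$ for your Markov/Borel--Cantelli step to yield $\rho_N$-almost-sure convergence. But what the proof of Corollary~\ref{cor:sobolevestimate} actually produces is $\int\exp(\eta N\Psi)\,\rho^{\otimes N}\le C$ --- the exponential moment under the \emph{tensorized} measure, not the Gibbs measure. Lemma~\ref{lem:integralcontrol} is precisely the device used to transfer \emph{first} moments from $\rho^{\otimes N}$ to $\rho_N$ at the price of a relative-entropy term; it does not transfer exponential moments. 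The bound you want is in fact attainable, but it needs an extra argument you did not supply: use the explicit Gibbs form $\rho_N/\rho^{\otimes N}=(Z^N/Z_N)e^{\beta\Phi}$ with $\Phi$ as in \eqref{eq:nbodyPhi}, note $Z^N/Z_N\le 1$ from the nonnegativity of $\cH(\rho^{\otimes N}\mid\rho_N)$ established in the proof of Theorem~\ref{thm:main}, and then apply Cauchy--Schwarz,
\begin{equation*}
\int e^{\eta N\Psi}\,\rho_N \;\le\; \Bigl(\int e^{2\eta N\Psi}\rho^{\otimes N}\Bigr)^{1/2}\Bigl(\int e^{2\beta\Phi}\rho^{\otimes N}\Bigr)^{1/2},
\end{equation*}
where the second factor is finite for $\beta$ small enough by Lemma~\ref{lmm:expcontrol} (the same bound used in the proof of Theorem~\ref{thm:main} with $c\ge 2$). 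Without this step, a change of measure using only $\cH(\rho_N\mid\rho^{\otimes N})\le C$ yields tail probabilities of order $1/N$, which are not summable, so Borel--Cantelli would fail. As written, your argument has a genuine gap at this point; it is fixable, but the fix is not ``already present in the proof.''

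\textbf{The second assertion uses a different route than the paper.} The paper obtains the convergence of the predictor from Corollary~\ref{cor:boundness}: since $W(\theta,\theta')=\E_x[\sigma_*(x;\theta)\sigma_*(x;\theta')]$, one has $\langle W,\mathcal{G}_N^{\otimes 2}\rangle = N\,\E_x[(\hat y(x)-\int\sigma_*\rho)^2]$, and the uniform bound on $\E_{\rho_N}\langle W,\mathcal{G}_N^{\otimes 2}\rangle$ gives $\E_{\rho_N}\E_x[(\hat y-y^*)^2]\le C/N$ under nothing more than boundedness of $W$. You instead pair $\sigma_*(x,\cdot)$ with $\mu_N-\rho$ in the $H^s$--$H^{-s}$ duality and invoke your a.s.\ bound. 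This gives pointwise-in-$x$ convergence, which is closer to the literal wording ``for all $x$,'' but it requires $\sigma_*(x,\cdot)\in H^s(\R^D)$, a substantially stronger condition than boundedness of $W$ (indeed $\sigma_*(x,\theta)=c\sigma(w^\top x+b)$ is linear and unbounded in $c$ over $\R^D$, so this genuinely needs the compactification/cutoff you mention). The paper's route is more in the spirit of ``use only what Theorem~\ref{thm:main} delivers,'' while yours buys pointwise control at the cost of extra regularity of the activation in $\theta$; both are defensible, but you should be explicit that you are not following the route the paper indicates.
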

The second part is actually a corollary of Corollary \ref{cor:boundness}.
In practice, training a very wide network requires more memory. 
Moreover, it is challenging to reach a true ``steady state" during training. The above result then suggests that we may do sampling from the $N$-body Gibbs measure to approximate the nearly optimal predictor.

\begin{figure}
    \centering
    \includegraphics[width=0.9\textwidth]{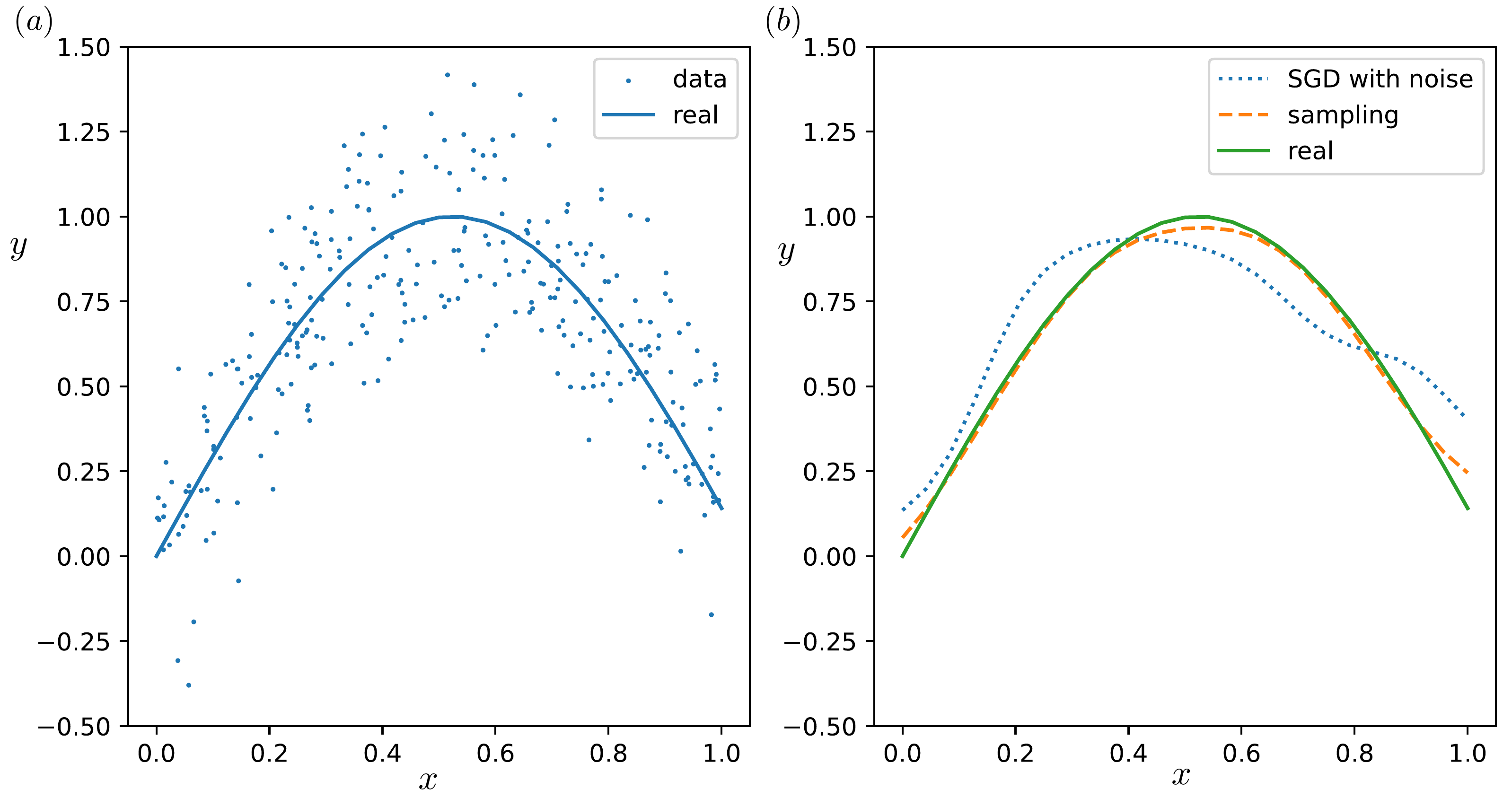}
    \caption{Training the two-layer neural networks by samplig.}
    \label{fig:nn}
\end{figure}

Let us consider a typical regression task. Here, measurement of the output value is taken after each input, with measurement errors following a Gaussian distribution. The goal is to predict new outputs for a new input. Consider the following data which are generated by
\begin{equation}
y_i=\sin(3x)+\varepsilon_i, \varepsilon_i\sim\mathcal{N}(0, 0.2), x\sim U[0,1].
\end{equation}
We use the two layer neural network model above to fit the data to approximate the model $y=f(x)=\sin(3x)$. Here, we use the sigmoid function $\sigma(x)=\frac{1}{1+e^{-x}}$ in \eqref{eq:sigmastar}.
 The network width is chosen as $N=64$.

The noisy SGD update \eqref{eq:noisysgd} is applied with  $\lambda=0$ for optimization. The time step (or learning rate) $s_k=10$ and $\beta=2000$.
For sampling, we apply RBMC without splitting. Hence, the algorithm becomes almost the same with SGLD or the noisy SGD, with the difference that we are allowed to update a few neurons in each iteration. The stepsize is the same as above. 
In the RBMC, a warm-up phase of $N_b=10000$, a total number of iterations of $N_s=20000$. In other words, we take $N\times N_s = 1.28\times 10^6$ samples for the empirical distribution of $\theta$. The steady-state distribution is reconstructed using the empirical distribution:
\begin{equation}
\tilde{y}(x;\boldsymbol{\theta})=\mathbb{E}[\sigma_*(x;\boldsymbol{\theta})]\approx\frac{1}{M}\sum_{i=1}^M\sigma_*(x;\theta_i).
\end{equation}

Figure \ref{fig:nn}  illustrates the numerical results. The results by sampling (dashed line) is closer to the true curve $y=f(x)$ compared to the SGD training results (dotted line). It exhibits better training and generalization errors. This is clearly expected as we used more samples for $\theta$ and the results by sampling can be thought as the average of $N_s$ networks with width $N=64$. 

\begin{table}[h]
\centering
\caption{Neural Network Errors}\label{tab:nnloss}
\begin{tabular}{ccc}
\toprule
    &$\overline{\mathrm{err}}_{sgd}$&$\mathrm{err}_{s}$\\
\midrule
Training Set&0.0478&0.0409\\
Test Set&0.0490&0.0420\\
\bottomrule
\end{tabular}
\end{table}

Table \ref{tab:nnloss} shows the errors of the neural network. The "$\overline{\mathrm{err}}_{sgd}$" refers to the average of $N_s=2\times 10^4$ runs of the SGD mean square errors defined in \eqref{eq:nnempmse}, while ``$\mathrm{err}_{s}$'' corresponds to the mean square error \eqref{eq:nnempmse} of the empirical distribution. Clearly, the errors of the predictor constructed by sampling is smaller.

\section*{Acknowledgement}

This work was financially supported by the National Key R\&D Program of China, Project Number 2021YFA1002800 and 2020YFA0712000. The work of L. Li was partially supported by NSFC 12371400 and 12031013,  Shanghai Municipal Science and Technology Major Project 2021SHZDZX0102, Shanghai Science and Technology Commission (Grant No. 21JC1403700, 20JC144100, 21JC1402900), and the Strategic Priority Research Program of Chinese Academy of Sciences, Grant No. XDA25010403.

\bibliographystyle{plain}
\bibliography{meanfield}

\end{document}